\newtheorem{theorem}{Theorem}[section]
\newtheorem{lemma}[theorem]{Lemma}
\newtheorem{observation}[theorem]{Observation}
\theoremstyle{definition}
\newtheorem{definition}[theorem]{Definition}
\theoremstyle{remark}
\newtheorem{remark}[theorem]{Remark}
\newcommand\remove[1]{}
\def\f2{\mathbb{F}_2}
\def\lip{\hskip0.02cm{\rm Lip}\hskip0.01cm}
\newcommand{\conv}{{\rm conv}\hskip0.02cm}
\newcommand{\ep}{\varepsilon}
\newcommand{\diam}{{\rm diam}\hskip0.02cm}
\begin{document}
\title{\LARGE{Test-space
characterizations of some classes of Banach spaces}}

\author{M.\,I.~Ostrovskii}

\date{\today}
\maketitle

\noindent{\bf Abstract.} Let $\mathcal{P}$ be a class of Banach
spaces and let $T=\{T_\alpha\}_{\alpha\in A}$ be a set of metric
spaces. We say that $T$ is a set of {\it test-spaces} for
$\mathcal{P}$ if the following two conditions are equivalent: {\bf
(1)} $X\notin\mathcal{P}$; {\bf (2)} The spaces
$\{T_\alpha\}_{\alpha\in A}$ admit uniformly bilipschitz
embeddings into $X$.
\medskip

The first part of the paper is devoted to a simplification of the
proof of the following test-space characterization obtained in
M.\,I.~Ostrovskii [Different forms of metric characterizations of
classes of Banach spaces, {\it Houston J. Math.}, to appear]:
\medskip

For each sequence $\{X_m\}_{m=1}^\infty$ of finite-dimensional
Banach spaces there is a sequence $\{H_n\}_{n=1}^\infty$ of finite
connected unweighted graphs with maximum degree $3$ such that the
following conditions on a Banach space $Y$ are equivalent: {\bf
(A)}  $Y$ admits uniformly isomorphic embeddings of
$\{X_m\}_{m=1}^\infty$; {\bf (B)} $Y$ admits uniformly bilipschitz
embeddings of $\{H_n\}_{n=1}^\infty$.
\medskip

The second part of the paper is devoted to the case when
$\{X_m\}_{m=1}^\infty$ is an increasing sequence of spaces. It is
shown that in this case the class of spaces given by {\bf (A)} can
be characterized using one test-space, which can be chosen to be
an infinite graph with maximum degree $3$.
\medskip

\noindent{\bf 2010 Mathematics Subject Classification:} Primary:
46B07; Secondary: 05C12, 46B85, 54E35

\begin{large}

\tableofcontents

\section{Introduction}

Embeddings of metric spaces into Banach spaces play an important
role in Computer Science (see, for example, \cite[Chapter
15]{WS11}) and in Topology (see \cite{Yu06}). In connection with
problems of embeddability of metric spaces into Banach spaces it
would be interesting to find metric characterizations of
well-known classes of Banach spaces. By {\it metric
characterizations} we mean characterizations which can be tested
on an arbitrary metric space. So, in a metric characterization
only distances between elements of the space are involved, and no
linear combinations of any kind can be used. At this point it
seems to be unclear: how to define the notion of a metric
characterization which would be the most useful for applications
in the theory of metric embeddings? One can try to define a metric
characterization in the following way: a {\it metric
characterization} is a set of {\it formulas} understood as in
logic (see \cite[p.~19]{Han77} for a definition of a first-order
formula). Some of the variables in the formulas are elements in an
unknown metric space $X$ (so the formulas make sense for an
arbitrary metric space $X$). We say that such set of formulas {\it
characterizes} a class $\mathcal{P}$ of Banach spaces if the
following conditions are equivalent for a Banach space $X$:

\begin{itemize}

\item $X\in\mathcal{P}$

\item All of the formulas of the set hold for $X$.

\end{itemize}

Metric characterizations which we are going to study in this paper
use the following definition.

\begin{definition} {\rm
Let $0\le C<\infty$. A map $f: (A,d_A)\to (Y,d_Y)$ between two
metric spaces is called $C$-{\it Lipschitz}\label{lip} if
\[\forall u,v\in A\quad d_Y(f(u),f(v))\le Cd_A(u,v).\] A map $f$
is called {\it Lipschitz} if it is $C$-Lipschitz for some $0\le
C<\infty$. For a Lipschitz map $f$ we define its {\it Lipschitz
constant}\label{lipC} by
\[\lip f:=\sup_{d_A(u,v)\ne 0}\frac{d_Y(f(u),
f(v))}{d_A(u,v)}.\label{lipCE}\]

Let $1\le C<\infty$. A map $f:A\to Y$ is called a {\it
$C$-bilipschitz embedding} if there exists $r>0$ such that
\begin{equation}\label{E:MapDist}\forall u,v\in A\quad rd_A(u,v)\le
d_Y(f(u),f(v))\le rCd_A(u,v).\end{equation} A {\it bilipschitz
embedding} is an embedding which is $C$-bilipschitz for some $1\le
C<\infty$. The smallest constant $C$ for which there exist $r>0$
such that \eqref{E:MapDist} is satisfied is called the {\it
distortion} of $f$. A set of embeddings is called {\it uniformly
bilipschitz} \,if they have uniformly bounded distortions.}
\end{definition}

\begin{remark} Linear embeddings $T_i:A_i\to Y$ of Banach (or normed) spaces
into a Banach (normed) space are uniformly bilipschitz if and only
if
\[\sup_i\left(||T_i||\cdot||T_i^{-1}|_{T_i(A_i)}||\right)<\infty.\]
Such embeddings $T_i$ are called {\it uniformly isomorphic}.
\end{remark}

\begin{remark}\label{R:TrivNonRef}
The definition of a metric characterization suggested above does
not seem to be completely satisfactory. It includes trivial (in a
certain sense) characterizations of the type: A Banach space is
nonreflexive if and only if it contains a (separable) subset which
is bilipschitz equivalent to a nonreflexive separable Banach
space. (The validity of this characterization is a consequence of
the following well-known facts: (1) Each nonreflexive Banach space
contains a separable nonreflexive subspace. This fact follows, for
example, from the Eberlein--Shmulian theorem \cite[Theorem
V.6.1]{DS58}; (2) If a Banach space $Y$ admits a bilipschitz
embedding of a (separable) nonreflexive Banach space $X$, then $Y$
is nonreflexive, see \cite[Lemma 3.1]{HM82} or \cite[Theorem
7.9]{BL00}.)
\end{remark}

At this point it is not clear how to define a metric
characterization in such a way that, on one hand, all interesting
examples are included, but, on the other hand, characterizations
like the trivial characterization of nonreflexivity mentioned in
Remark \ref{R:TrivNonRef} are excluded. We shall focus on one of
the classes of metric characterizations which is known to be
interesting (see
\cite{Bau07,Bau09+,Bou86,BMW86,JS09,MN08,Ost11a,Ost11d,Pis86}). We
mean metric characterizations of the following type.

\begin{definition}\label{D:TestSp} Let $\mathcal{P}$ be a class of Banach spaces
and let $T=\{T_\alpha\}_{\alpha\in A}$ be a set of metric spaces.
We say that $T$ is a set of {\it test-spaces} for $\mathcal{P}$ if
the following two conditions are equivalent
\begin{enumerate}
\item\label{I:NotInP} $X\notin\mathcal{P}$.

\item\label{I:DefTesSp2} The spaces $\{T_\alpha\}_{\alpha\in A}$
admit uniformly bilipschitz embeddings into $X$.
\end{enumerate}
\end{definition}

\begin{remark} We use $X\notin\mathcal{P}$ in condition \ref{I:NotInP} of Definition \ref{D:TestSp} rather than
$X\in\mathcal{P}$ for terminological reasons: we would like to use
terms ``test-spaces for reflexivity, superreflexivity, etc.''
rather than ``test-spaces for {\bf non}reflexivity, {\bf
non}superreflexivity, etc.''\end{remark}

\begin{remark} One can introduce the notion of test-spaces differently,
requiring, for example: ``at least one of the spaces
$\{T_\alpha\}_{\alpha\in A}$ admits a bilipschitz embedding into
$X$''. However, this version of test-space characterizations
includes the trivial characterization of reflexivity mentioned in
Remark \ref{R:TrivNonRef}. Another reason why we have chosen the
introduced in Definition \ref{D:TestSp} notion of test-spaces is:
many important known characterizations are of this form (see
\cite{Bau07,Bau09+,Bou86,BMW86,JS09,MN08,Ost11a,Ost11d,Pis86}).
\end{remark}

\section{Simplification of the proof on metric characterizations
of classes with excluded subspaces}\label{S:Houston}

The main purpose of this section is to give a simpler proof of the
following result of \cite{Ost11d}:

\begin{theorem}[\cite{Ost11d}]\label{T:Ost11+} For each sequence $\{X_m\}_{m=1}^\infty$
of finite-dimensional Banach spaces there exists a sequence
$\{H_n\}_{n=1}^\infty$ of finite connected unweighted graphs with
maximum degree $3$ such that the following conditions on a Banach
space $Y$ are equivalent:
\begin{itemize}
\item $Y$ admits uniformly isomorphic embeddings of
$\{X_m\}_{m=1}^\infty$. \item $Y$ admits uniformly bilipschitz
embeddings of $\{H_n\}_{n=1}^\infty$.
\end{itemize}
\end{theorem}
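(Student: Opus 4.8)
The plan is to reduce the problem to finite-dimensional spaces one at a time and then encode each finite-dimensional normed space by a finite graph whose bilipschitz geometry captures the isomorphism class of the ball. The natural strategy runs as follows. First I would fix, for each $m$, a fine $\varepsilon_m$-net $\mathcal{N}_m$ in the unit ball of $X_m$ (with $\varepsilon_m\to 0$), equipped with the metric inherited from $X_m$. Because $X_m$ is finite-dimensional, admitting a uniformly isomorphic embedding of $X_m$ is equivalent to admitting uniformly bilipschitz embeddings of these nets, so it suffices to produce, for each such net, an unweighted connected graph of maximum degree $3$ that is bilipschitz equivalent to it (with a distortion bounded independently of $m$). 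Enumerating these graphs gives the desired sequence $\{H_n\}_{n=1}^\infty$, and one checks that ``$Y$ admits uniformly bilipschitz embeddings of all the $H_n$'' unpacks exactly to ``$Y$ admits uniformly isomorphic embeddings of all the $X_m$''.

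The construction of the graph from the net proceeds in two stages. Stage one: replace the finite metric space $\mathcal{N}_m$ by a weighted graph on the same vertex set with integer (or rational, then rescaled to integer) edge weights approximating the distances, using the standard fact that any finite metric space embeds with distortion arbitrarily close to $1$ into the shortest-path metric of a suitable weighted complete graph. Stage two: convert the weighted bounded-degree-free graph into an \emph{unweighted} graph of maximum degree $3$. For the weights one subdivides each edge of integer length $\ell$ into a path of $\ell$ unit edges; for the degree one replaces each high-degree vertex by a small binary-tree ``gadget'' of logarithmic depth, with the original incident edges attached to the leaves. One must pad these gadgets and the subdivision lengths so that the detour introduced at each vertex is negligible compared with the scale of the net — this is arranged by first blowing up all distances in $\mathcal{N}_m$ by a large factor $R_m$ before subdividing, so that the $O(\log(\dim X_m))$ overhead of a gadget is swamped. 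After these operations one has a connected unweighted graph $H_n$ with $\Delta(H_n)\le 3$ whose path metric restricted to the images of the net points is $(1+o(1))$-bilipschitz to $\mathcal{N}_m$, hence (by the net approximation) to $B_{X_m}$.

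The two implications of the equivalence are then routine. If $Y$ admits uniformly isomorphic embeddings of the $X_m$, compose with the identity $\mathcal{N}_m\hookrightarrow X_m$ and with the bilipschitz equivalence $H_n\leftrightarrow\mathcal{N}_m$ to get uniformly bilipschitz embeddings of the $H_n$ into $Y$. Conversely, if $Y$ admits uniformly bilipschitz embeddings of the $H_n$, restrict to the net vertices to get uniformly bilipschitz embeddings of the $\mathcal{N}_m$ into $Y$; a weak$^*$/compactness argument (taking pointwise limits of the net embeddings as $\varepsilon_m\to 0$, possible because $X_m$ is finite-dimensional and $Y$, after passing to an ultrapower if necessary, is weak$^*$-complete in the relevant sense, cf.\ Remark~\ref{R:TrivNonRef}) upgrades these to a uniformly bilipschitz embedding of each $B_{X_m}$ and thence, by the Mazur–Ulam/linearization principle for bilipschitz embeddings of balls of finite-dimensional spaces, to a uniformly isomorphic linear embedding of $X_m$.

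The main obstacle, and the place where genuine care is needed, is \textbf{Stage two done uniformly}: making sure that the distortion incurred by the degree-reduction gadgets together with the edge subdivisions is bounded by a single constant independent of $m$, even though $\dim X_m$ (hence the maximum degree of the intermediate weighted graph, which can be as large as $|\mathcal{N}_m|$) grows without control. The resolution is the rescaling-by-$R_m$ trick sketched above: one must verify quantitatively that choosing $R_m$ growing fast enough relative to $\log|\mathcal{N}_m|$ keeps the multiplicative error at each vertex below $(1+\varepsilon_m)$, and that these local errors compose (along shortest paths, which pass through at most $|\mathcal{N}_m|$ gadgets) into a global multiplicative error still tending to $1$ — this forces a second round of rescaling, and bookkeeping the two scales simultaneously is the technical heart of the argument. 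Everything else (the net approximation, the subdivision, the linearization on the Banach-space side) is standard.
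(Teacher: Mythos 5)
Your construction of $H_n$ (net, weighted graph realizing the net metric, edge subdivision, binary-tree degree-reduction gadgets) is fine as far as it goes, and your argument for the implication ``uniform bilipschitz embeddings of $\{H_n\}$ into $Y$ $\Rightarrow$ uniform isomorphic embeddings of $\{X_m\}$'' is essentially the paper's: restrict to the net vertices and invoke the Ribe--Heinrich--Mankiewicz--Bourgain discretization theorem (Theorem \ref{T:discr}; this, not Mazur--Ulam, is the linearization tool, but that step is standard). The genuine gap is in the opposite implication. You appeal to ``the bilipschitz equivalence $H_n\leftrightarrow\mathcal{N}_m$'', but no such equivalence exists in your construction: $H_n$ only \emph{contains} a bilipschitz copy of $\mathcal{N}_m$, and the bulk of its vertices are subdivision points and gadget vertices. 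To get uniformly bilipschitz embeddings of the whole graphs $H_n$ into $Y$ you must place all of these auxiliary vertices with distortion bounded independently of $m$, and the obvious extension along straight segments in $X_m\subset Y$ fails: if $uv$ and $wz$ are two edges of the net-graph forming, say, the diagonals of a metric square, the midpoints of the corresponding subdivided paths are at graph distance comparable to the (large) subdivision length in $H_n$, while their straight-line images in $X_m$ nearly coincide, so the inverse of the naive map has unbounded Lipschitz constant. Nothing in your proposal replaces this; it is exactly the nontrivial half of the theorem, whereas the gadget/rescaling bookkeeping you flag as the technical heart is the easy part.

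For comparison, the paper's way around this is structural: since one may assume $\sup_m\dim X_m=\infty$ (Remark \ref{R:InfDim}), any $Y$ admitting uniformly isomorphic copies of the $X_m$ is infinite-dimensional (Observation \ref{O:IntoInfDim}), so the subdivided graph is embedded not into $X_m$ but into an overspace $W\supset X_m$ with $\dim(W/X_m)$ equal to the number of edges: each edge is ``tented'' in its own direction $e_{uv}$ coming from a lifted Auerbach basis of $W/X_m$, and the biorthogonal functionals $e_{uv}^*$ supply the lower Lipschitz bounds that the straight-line map lacks (Lemma \ref{L:TGintoOver}); an additional $\oplus_1\mathbb{R}$ coordinate then takes care of the vertices produced by the degree-reduction device (the ``short paths'', playing the role of your gadgets). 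Some substitute for this step --- exploiting extra dimensions of $Y$ beyond $X_m$, or the much harder embedding into $X_m$ itself as in the earlier paper --- is indispensable before your outline can be called a proof.
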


Everywhere in this paper we consider graphs as metric spaces with
their {\it shortest path metric}: the distance between two
vertices is the length of the shortest path between them. In some
cases we consider weighted graphs with some positive weights
assigned to their edges. In such a case the length of the path is
the sum of weights of edges included in it. For graphs with no
weights (sometimes we emphasize this by calling them {\it
unweighted graphs}) the length of a path is the number of edges in
it (this corresponds to the case when all weights are equal to
$1$).

\begin{remark}\label{R:InfDim} For the reasons explained in \cite{Ost11d} we restrict our
attention to the case $\sup_m\dim X_m=\infty$.
\end{remark}

Our purpose is to simplify the proof of the step which has the
longest proof in \cite{Ost11d}. We shall also present other steps
of the proof, in a more general form than in \cite{Ost11d}. The
reason for doing so is that we need these steps in the more
general form later in the paper. Recall some standard definitions.

\begin{definition}\label{D:MetrNotions} Let $\alpha>0$. We say that a subset $A$ in a metric space $(X,d)$ is
{\it $\alpha$-dense} in a subset $B\subset X$ if $A\subset B$ and
\[\forall x\in B~\exists y\in A~d(x,y)\le\alpha.\]

A subset $D$ in a metric space $(X,d)$ is  called {\it
$\alpha$-separated} if $d(x,y)\ge\alpha$ for each $x,y\in D$,
$x\ne y$.
\medskip

If $A$ and $B$ are subsets in a metric space $(X,d)$, we let
\[d(A,B)=\inf\{d(x,y):~x\in A,~ y\in B\}.\]
\end{definition}
\medskip

First we introduce an approximate description of convex sets in
Banach spaces using unweighted graphs. Let $C$ be a convex set in
a Banach space $X$, $\alpha,\beta>0$, and let $V$ be an
$\alpha$-separated $\beta$-dense subset of $C$.

\begin{remark} It is easy to see that such subset $V$ does not exist if
$\beta<\frac{\alpha}2$ and $\diam C>\beta$. In this paper only the
case where $\beta\ge\alpha$ is considered. In this case the
existence is immediate from Zorn's lemma.\end{remark}

\begin{definition}\label{D:CAlphaBeta} {\rm
Let $G$ be the graph whose vertex set is $V$ and whose edge set is
defined in the following way: vertices $u, v\in V$ are joined by
an edge if and only if $||u-v||\le 3\beta$. The graph $G$ is
called a {\it $(C,\alpha,\beta)$-graph}. If $\alpha=\beta$, $G$ is
called a {\it $(C,\alpha)$-graph}.}
\end{definition}

It is easy to check that $G$ is not uniquely determined by $C$,
$\alpha$, and $\beta$, but for our purposes it does not matter
which of $(C,\alpha,\beta)$-graphs we pick. We endow the vertex
set $V$ of $G$ with its shortest path metric $d_G$.

\begin{lemma}\label{L:CAlphaBeta} The natural embedding $f:(V,d_G)\to (X,||\cdot||)$ is bilipschitz
with distortion $\le\max\left\{3,\frac{3\beta}\alpha\right\}$.
More precisely,
\[\lip(f)\le 3\beta~\hbox{and}~
\lip(f^{-1}|_{f(V)})\le\max\left\{\frac1\beta,\frac1\alpha\right\}.\]
\end{lemma}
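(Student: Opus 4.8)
The plan is to establish the two displayed estimates separately; the distortion bound $\le\max\{3,3\beta/\alpha\}$ is then immediate and \eqref{E:MapDist} holds with $r=\min\{\alpha,\beta\}$. The inequality $\lip(f)\le 3\beta$ is the easy half: given $u,v\in V$, take a shortest path $u=w_0,w_1,\dots,w_k=v$ in $G$, so $k=d_G(u,v)$; by Definition \ref{D:CAlphaBeta} each edge satisfies $\|w_{i-1}-w_i\|\le 3\beta$, hence the triangle inequality gives $\|f(u)-f(v)\|=\|u-v\|\le\sum_{i=1}^{k}\|w_{i-1}-w_i\|\le 3\beta\,d_G(u,v)$.

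For the second estimate it is equivalent to show $d_G(u,v)\le\max\{1/\beta,1/\alpha\}\,\|u-v\|$ for all distinct $u,v\in V$. Put $L=\|u-v\|$; since $V$ is $\alpha$-separated, $L\ge\alpha\ge\min\{\alpha,\beta\}$, and $\max\{1/\beta,1/\alpha\}=1/\min\{\alpha,\beta\}$. If $L\le 3\beta$ then $uv$ is an edge of $G$, so $d_G(u,v)=1\le L/\min\{\alpha,\beta\}$. If $L>3\beta$, parametrize the segment $[u,v]\subset C$ by arc length, $\gamma\colon[0,L]\to C$ with $\gamma(0)=u$ and $\gamma(L)=v$. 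Let $\ell$ be the largest integer with $(\ell+1)\beta<L$ (so $\ell\ge 2$ because $L>3\beta$), and for $1\le i\le\ell$ use the $\beta$-density of $V$ to pick $p_i\in V$ with $\|p_i-\gamma((i+1)\beta)\|\le\beta$; set $p_0=u$. Then consecutive members of $u=p_0,p_1,\dots,p_\ell,v$ lie at norm-distance $\le 3\beta$: the first step uses $\|u-\gamma(2\beta)\|=2\beta$ together with $\|\gamma(2\beta)-p_1\|\le\beta$; an interior step gives $\|p_{i-1}-p_i\|\le\|p_{i-1}-\gamma(i\beta)\|+\beta+\|\gamma((i+1)\beta)-p_i\|\le 3\beta$; and the last step uses $L-(\ell+1)\beta\le\beta$ (maximality of $\ell$), so $\|p_\ell-v\|\le 2\beta$. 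Thus this list is a walk in $G$ from $u$ to $v$ of length $\ell+1$, and since $(\ell+1)\beta<L$ we obtain $d_G(u,v)\le\ell+1<L/\beta\le L/\min\{\alpha,\beta\}$.

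The only delicate point is the length-$2\beta$ first step above. A naive uniform subdivision of $[u,v]$ into pieces of length $\le\beta$ produces a walk with $\lceil L/\beta\rceil$ edges, which need not be $\le L/\beta$ when $L/\beta\notin\mathbb{Z}$ — for instance with $\alpha=\beta$ and $L=\tfrac72\beta$ it gives $4$ whereas the target bound is $L/\alpha=\tfrac72$ — so the constant would come out worse than claimed. Replacing the first break-point $\gamma(\beta)$ by $\gamma(2\beta)$, which is legitimate precisely because $u\in V$ contributes no approximation error on its side, removes exactly this deficit and pushes the edge count strictly below $L/\beta$. The $\alpha$-separation hypothesis is used only via $L\ge\alpha$ in the case $L\le 3\beta$.
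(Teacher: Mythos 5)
Your proof is correct and is essentially the paper's argument: the paper also handles $\|u-v\|\le 3\beta$ directly via $\alpha$-separation, and for larger distances builds a chain of net points along the straight segment (using convexity plus $\beta$-density) with the same key trick that the first step from $u$ may have length $2\beta$ since $u$ itself carries no approximation error, yielding a walk of length strictly below $\|u-v\|/\beta$. The only difference is presentational: the paper organizes the chain as an induction on $\lfloor\|u-v\|/\beta\rfloor$ (proving $d_G(u,v)\le\lfloor d\rfloor-1$ for $d>2$), whereas you write out the subdivision explicitly in one shot.
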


\begin{proof} The inequality $\lip(f)\le 3\beta$ follows
immediately from the fact that adjacent vertices in $G$ are at
distance $\le3\beta$ in $X$, and the definition of the shortest
path metric.
\medskip

To prove the inequality for $\lip(f^{-1})$ we consider two
distinct vertices $u,v\in V$, write $||u-v||=d\beta$ for some
$d>0$, and consider two cases:\medskip

\noindent{\bf Case A.} $d\le 3$. In such a case $d_G(u,v)=1$.
Since $||u-v||\ge\alpha$, we have
\[\frac{d_G(u,v)}{||u-v||}\le\frac1\alpha.\]

\noindent{\bf Case B.} $d>3$. In this case, and even in a wider
case $d>2$, we show that
\begin{equation}\label{E:CaseB}d_G(u,v)\le\lfloor
d\rfloor-1\end{equation} and therefore
\[
\frac{d_G(u,v)}{||u-v||}\le\frac{\lfloor
d\rfloor-1}{d\beta}\le\frac1\beta.
\]

We prove the inequality \eqref{E:CaseB} by induction starting with
$2<d\le 3$. In this case $d_G(u,v)=1$ and so it is clear that
\eqref{E:CaseB} is satisfied.

Suppose that we have proved the inequality \eqref{E:CaseB} for
$2<d\le n$. Let us show that this implies the inequality for
$n<d\le n+1$. We do this as follows:

Consider the vertex $\widetilde u$ lying on the line segment
joining $u$ and $v$ at distance $2\beta$ from $u$. Since
$\widetilde u\in C$ (this is the point where we use the convexity
of $C$), there is $w\in V$ satisfying $||w-\widetilde
u||\le\beta$.\medskip

By the triangle inequality, we have $||w-u||\le 3\beta$ and
$||w-v||\le (d-1)\beta$. The first inequality implies
$d_G(w,u)=1$. Applying the triangle inequality and the Induction
Hypothesis, we get $d_G(u,v)\le d_G(w,v)+1\le(\lfloor
d-1\rfloor-1)+1=\lfloor d\rfloor-1$. (We cannot apply the
Induction Hypothesis if $||w-v||\le 2\beta$, but in this case the
result is also easy to verify.)
\end{proof}

If $X$ is a Banach space, we use the notation $B_X(r)$, $r>0$, for
$\{x\in X:~||x||\le r\}$. The unit ball $B_X(1)$ is also denoted
by $B_X$. Observe that if $X$ is finite-dimensional (and
$\beta\ge\alpha$), $(B_X(r),\alpha,\beta)$-graphs are finite.
\medskip

The next step in the proof of Theorem \ref{T:Ost11+} in
\cite{Ost11d} is the following lemma (in \cite{Ost11d} the lemma
is stated in slightly less general form).

\begin{lemma}\label{L:GraphsToSubspaces} If $\{X_m\}_{m=1}^\infty$ are finite-dimensional Banach spaces,
and a Banach spa\-ce $Y$ admits uniformly bilipschitz embeddings
of a collection of
$\left(B_{X_m}(n),\alpha(n),\beta(n)\right)$-graphs
$m,n\in\mathbb{N}$, where $\alpha(n)\le\beta(n)$,
$\lim_{n\to\infty} \alpha(n)=0$, $\lim_{n\to\infty}\beta(n)=0$,
and $\sup_n\frac{\beta(n)}{\alpha(n)}<\infty$, then
$\{X_m\}_{m=1}^\infty$ are uniformly isomorphic to subspaces of
$Y$.
\end{lemma}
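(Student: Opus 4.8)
The plan is to convert the combinatorial hypothesis into analytic data in three stages: (i) rescale the graph embeddings into genuine bilipschitz maps of the nets $V_{m,n}\subset B_{X_m}(n)$ whose multiplicative constants are \emph{absolute} (not merely bounded distortions); (ii) for each fixed $m$, pass to an ultrapower of $Y$ and let $n\to\infty$ to obtain a bilipschitz map of all of $X_m$ with the same constants; (iii) since $X_m$ is finite-dimensional, straighten this map into a linear isomorphic embedding and push it back into $Y$ itself.

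For (i): let $D$ bound the distortions of the given embeddings $\varphi_{m,n}\colon(V_{m,n},d_{G_{m,n}})\to Y$, let $K=\sup_n\beta(n)/\alpha(n)<\infty$, and let $r_{m,n}>0$ be the scaling constant from the definition of distortion, so that $r_{m,n}d_{G_{m,n}}(u,v)\le\|\varphi_{m,n}(u)-\varphi_{m,n}(v)\|\le Dr_{m,n}d_{G_{m,n}}(u,v)$. Lemma \ref{L:CAlphaBeta} (with $\alpha(n)\le\beta(n)$) gives $\alpha(n)d_{G_{m,n}}(u,v)\le\|u-v\|_{X_m}\le 3\beta(n)d_{G_{m,n}}(u,v)$, so $\psi_{m,n}:=\tfrac{3\beta(n)}{r_{m,n}}\varphi_{m,n}$ satisfies
\[\|u-v\|_{X_m}\le\|\psi_{m,n}(u)-\psi_{m,n}(v)\|_Y\le C\|u-v\|_{X_m}\qquad(u,v\in V_{m,n}),\]
where $C:=3DK$ does not depend on $m,n$. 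Pick $v^\ast_{m,n}\in V_{m,n}$ with $\|v^\ast_{m,n}\|_{X_m}\le\beta(n)$ (possible since $0\in B_{X_m}(n)$ and $V_{m,n}$ is $\beta(n)$-dense) and translate to $\widetilde\psi_{m,n}:=\psi_{m,n}-\psi_{m,n}(v^\ast_{m,n})$.

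For (ii): fix $m$ and a nonprincipal ultrafilter $\mathcal U$ on $\mathbb N$. Given $x\in X_m$, choose for each $n\ge\|x\|_{X_m}$ a point $v_n(x)\in V_{m,n}$ with $\|v_n(x)-x\|_{X_m}\le\beta(n)$; then $\|\widetilde\psi_{m,n}(v_n(x))\|_Y\le C(\|x\|_{X_m}+2\sup_n\beta(n))$, a bound independent of $n$, so $(\widetilde\psi_{m,n}(v_n(x)))_n$ represents an element $\Psi_m(x)$ of the ultrapower $Y_{\mathcal U}$. Since $\beta(n)\to 0$, $\Psi_m(x)$ is independent of the choices of $v_n(x)$; one may take $v_n(0)=v^\ast_{m,n}$, so $\Psi_m(0)=0$; and $\|v_n(x)-v_n(y)\|_{X_m}\to\|x-y\|_{X_m}$ as $n\to\infty$. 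Passing to the ultralimit in
\[\|v_n(x)-v_n(y)\|_{X_m}\le\|\widetilde\psi_{m,n}(v_n(x))-\widetilde\psi_{m,n}(v_n(y))\|_Y\le C\|v_n(x)-v_n(y)\|_{X_m}\]
yields $\|x-y\|_{X_m}\le\|\Psi_m(x)-\Psi_m(y)\|_{Y_{\mathcal U}}\le C\|x-y\|_{X_m}$. Thus $\Psi_m\colon X_m\to Y_{\mathcal U}$ is a bilipschitz embedding of distortion $\le C$, with $C$ independent of $m$.

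For (iii): because $X_m$ is finite-dimensional, the ultrapower/differentiation technique for bilipschitz embeddings of finite-dimensional spaces (Heinrich--Mankiewicz, \cite{HM82}; cf.\ \cite{BL00}) produces a \emph{linear} isomorphic embedding $T_m$ of $X_m$ into a further ultrapower $(Y_{\mathcal U})_{\mathcal V}$ with $\|T_m\|\,\|T_m^{-1}\|\le C$. Since $(Y_{\mathcal U})_{\mathcal V}$ is finitely representable in $Y$ and $X_m$ is finite-dimensional, $X_m$ is $(C+1)$-isomorphic to a subspace of $Y$, and $C+1=3DK+1$ is independent of $m$; hence $\{X_m\}_{m=1}^\infty$ are uniformly isomorphic to subspaces of $Y$.

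\textbf{Main obstacle.} Steps (i)--(ii) are routine bookkeeping with rescalings and ultralimits. The substantive step is (iii): upgrading a bilipschitz embedding of the finite-dimensional space $X_m$ into the (possibly very irregular) Banach space $Y_{\mathcal U}$ to a uniformly isomorphic \emph{linear} embedding. Equivalently, one must show that the mere existence in $Y$ of uniformly bilipschitz copies of arbitrarily fine nets of the balls $B_{X_m}$ already forces $X_m$ to embed linearly and uniformly into $Y$ --- and it is precisely here that the finite dimensionality of the $X_m$ is essential.
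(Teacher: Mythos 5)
Your argument is correct, but it takes a genuinely different (though related) route. The paper derives Lemma \ref{L:GraphsToSubspaces} in two lines from the discretization theorem (Theorem \ref{T:discr}): by Lemma \ref{L:CAlphaBeta} the vertex set of a $\left(B_{X_m}(n),\alpha(n),\beta(n)\right)$-graph is a $\beta(n)$-dense subset of $B_{X_m}(n)$ whose graph metric is $\max\left\{3,\frac{3\beta(n)}{\alpha(n)}\right\}$-equivalent to the norm metric, so for each fixed $m$ one takes $n$ so large that the density relative to the radius is below the $\ep$ of Theorem \ref{T:discr} and reads off a linear embedding $T:X_m\to Y$ with $\|T\|\cdot\|T^{-1}\|\le 3DK/(1-\gamma)$, uniformly in $m$. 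Your steps (i)--(ii) perform the same reduction via Lemma \ref{L:CAlphaBeta} and rescaling, but then replace the appeal to the quantitative discretization statement by an ultraproduct compactness argument producing a $C$-bilipschitz copy of all of $X_m$ inside $Y_{\mathcal U}$, after which step (iii) invokes the continuous Ribe/Heinrich--Mankiewicz differentiation theorem plus finite representability of (iterated) ultrapowers in $Y$ and the finite-dimensionality of $X_m$. In effect you re-prove the qualitative form of Theorem \ref{T:discr} rather than quoting it; both proofs thus rest on the same nonelementary ingredient (the one the paper cites from \cite{Rib76}, \cite{HM82}, \cite{BL00}), yours being self-contained modulo the differentiation theorem and requiring no explicit bound on how fine the nets must be, the paper's being shorter and ultrapower-free. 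Two bookkeeping remarks: the weak$^{*}$-differentiation step for a map into $Y_{\mathcal U}$ (passing to a separable norming subspace of the dual, or to the bidual, and returning by local reflexivity) typically costs a factor $(1+\ep)$, so your clean bound $\|T_m\|\cdot\|T_m^{-1}\|\le C$ should be read as $\le C(1+\ep)$; this, like your final constant $3DK+1$ versus the paper's $3DK/(1-\gamma)$, is harmless since only uniformity in $m$ is needed.
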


Lemma \ref{L:GraphsToSubspaces} can be derived from the following
discretization result.

\begin{theorem}\label{T:discr} For each finite-dimensional Banach space
$X$ and each $\gamma>0$ there exists $\ep>0$ such that for each
bilipschitz embedding $L$ of an $\ep$-dense subset of $B_X$, with
the metric inherited from $X$, into a Banach space $Y$ there is a
linear embedding $T:X\to Y$ such that
$(1-\gamma)||T||\cdot||T^{-1}||\le($the distortion of $L)$.
\end{theorem}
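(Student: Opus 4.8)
The plan is to reduce the statement to a compactness argument over the (compact) space of bilipschitz embeddings of a fixed finite $\ep$-net, combined with a perturbation/interpolation argument that promotes an almost-isometry on a net to a genuine linear map on all of $X$. First I would fix $\gamma>0$ and argue by contradiction: suppose that for every $\ep>0$ there is a bilipschitz embedding $L_\ep$ of some $\ep$-dense subset $N_\ep\subset B_X$ into some Banach space $Y_\ep$ whose distortion is strictly smaller than $(1-\gamma)\,\|T\|\cdot\|T^{-1}\|$ for every linear $T:X\to Y_\ep$; in particular $Y_\ep$ admits no linear embedding of $X$ with $\|T\|\cdot\|T^{-1}\|$ below $(1-\gamma)^{-1}\cdot(\text{dist}\,L_\ep)$. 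After normalizing (replace $L_\ep$ by $r^{-1}L_\ep$ so that $\|L_\ep(u)-L_\ep(v)\|\le \text{dist}(L_\ep)\cdot\|u-v\|$ and $\ge\|u-v\|$ on $N_\ep$), and translating so that $L_\ep(0)=0$ when $0\in N_\ep$ (or a near-zero net point), one extends $L_\ep$ to all of $B_X$ by a standard Lipschitz extension which changes the distortion by at most a factor $1+O(\ep)$.

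The core step is the following \emph{linearization on a net}: if $f:B_X\to Y$ is a map with $\|x-y\|\le\|f(x)-f(y)\|\le D\|x-y\|$ and $f(0)=0$, then one can find a linear $T:X\to Y$ with $\|T\|\cdot\|T^{-1}\|$ close to $D$. The classical route is to average: set $T x=\lim_{k}2^{-k}f(2^k x)$ along an ultrafilter, or more robustly for the discrete version to pick, for each of finitely many basis directions $e_1,\dots,e_n$ of $X$, a net point close to $e_i$ and define $Te_i$ to be (a rescaling of) $f$ at that point, then extend linearly. One checks, using that $N_\ep$ is $\ep$-dense and $f$ is $D$-Lipschitz, that $\|Tx-f(x)\|=O(\ep \cdot D\cdot n)\|x\|$ for $x\in B_X$, where the implicit constant depends only on the basis constant of $(e_i)$. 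Consequently the lower bound $\|f(x)-f(y)\|\ge\|x-y\|$ transfers to $\|Tx-Ty\|\ge(1-O(\ep))\|x-y\|$ and the upper bound transfers to $\|Tx-Ty\|\le(D+O(\ep))\|x-y\|$, so $\|T\|\cdot\|T^{-1}\|\le D+O(\ep)$. Choosing $\ep$ small enough that $D+O(\ep)\le (1-\gamma)^{-1}D$ contradicts the hypothesis that no such linear $T$ exists, since $D$ was at most the distortion of $L_\ep$ and we produced $T$ with $(1-\gamma)\|T\|\cdot\|T^{-1}\|\le D\le\text{dist}(L_\ep)$. This yields the theorem with the $\ep$ furnished by making the error terms beat $\gamma$.

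The step I expect to be the main obstacle is controlling the error in the linearization uniformly: the map $f$ is only defined (or only well-behaved) on a $\gamma$-net, so one must be careful that the extension to $B_X$ and the choice of the finitely many ``directional'' values do not degrade the lower Lipschitz bound — the lower bound is the fragile one, since a crude Lipschitz extension can collapse distances. The fix is to use the \emph{McShane/Whitney} extension coordinatewise in a way that only affects the upper bound, or to work entirely with the net points (never leaving $N_\ep$) and only at the very end invoke finite-dimensionality of $X$ to pass from the net to $X$ via a compactness/continuity argument: the set of $D$-bilipschitz, origin-fixing maps from $B_X$ to an arbitrary Banach space, modulo the relevant equivalence, embeds into a compact set of $n\times\infty$ ``Gram-type'' data, so an $\ep_k\to 0$ sequence of putative counterexamples subconverges to a genuine bilipschitz-but-not-linearizable embedding of $B_X$ itself, which is impossible. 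Either way, the quantitative bookkeeping relating $\ep$, $n=\dim X$, the basis constant, and $\gamma$ is the delicate part; everything else is a routine perturbation estimate.
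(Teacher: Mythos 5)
There is a genuine gap, and it sits exactly where you predicted the ``main obstacle'' would be. Your central linearization step --- pick net points near basis vectors $e_1,\dots,e_n$, set $Te_i:=f(\hbox{that point})$, extend linearly, and claim $\|Tx-f(x)\|=O(\ep\, D\, n)\|x\|$ --- is false. A map that is bilipschitz on an $\ep$-net is in no sense approximately additive: knowing $f$ near the basis vectors constrains $f$ at a generic point $x=\sum x_ie_i$ only through the Lipschitz condition, which gives an error of order $D$, not of order $\ep D$. (Concretely, one can have $f$ agree with a linear isometry at all points near the $e_i$ and still be far from any linear map elsewhere on the net.) The alternative you mention, $Tx=\lim_k 2^{-k}f(2^kx)$ along an ultrafilter, is not available either, since $f$ is only defined on (a net of) $B_X$ and cannot be dilated; and even for maps defined on all of $X$ such limits need not be linear. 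The only known routes to this theorem go through a differentiation-type argument: either the Ribe/Heinrich--Mankiewicz scheme (Lipschitz extension, passage to an ultrapower of $Y$, weak$^*$-G\^ateaux differentiability of Lipschitz maps into dual spaces, plus local reflexivity to come back to $Y$), or Bourgain's quantitative smoothing argument (Poisson-kernel convolution to locate a point of approximate differentiability, as explained by Begun and Giladi--Naor--Schechtman). The need for ultrapowers/weak$^*$ derivatives is not a technicality: a Lipschitz map into a general Banach space (e.g.\ into $c_0$ or $L_1$) may be nowhere differentiable, so no routine perturbation estimate can replace this machinery.

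Your fallback compactness argument is closer to a correct strategy --- since the theorem as stated does not require an explicit $\ep(X,\gamma)$, one can indeed reduce it by an ultraproduct/compactness argument to the continuous statement that a bilipschitz embedding of $B_X$ yields a linear embedding with essentially the same distortion --- but in your sketch that continuous statement is exactly the step dismissed as ``impossible'' without proof, and it is the heart of Ribe's theorem. Note also that the paper itself does not prove Theorem~\ref{T:discr}: it is used as a black box, with the proof attributed to Ribe, Heinrich--Mankiewicz, and Bourgain (see the references to \cite{Rib76}, \cite{HM82}, \cite{BL00}, \cite{Bou87}, \cite{Beg99}, \cite{GNS11}). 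So the honest comparison is: the paper cites a deep known theorem, while your proposal attempts to prove it with tools (linear extension from a basis plus perturbation) that cannot work; the missing idea is the differentiation argument in an ultrapower (or Bourgain's quantitative substitute for it).
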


This theorem goes back to Ribe \cite{Rib76}, a new proof of the
essential ingredients was found by Heinrich-Mankiewicz
\cite{HM82}, versions of these proofs are presented in
\cite{BL00}. The first explicit bound on $\ep$ was found by
Bourgain \cite{Bou87}. Bourgain's proof was simplified and
explained by Begun \cite{Beg99} and Giladi-Naor-Schechtman
\cite{GNS11}. We recommend everyone who would like to study this
result to start by reading \cite{GNS11}.\medskip

It is clear that the fact that the ball in Theorem \ref{T:discr}
has radius $1$ plays no role. To derive Lemma
\ref{L:GraphsToSubspaces} from Theorem \ref{T:discr} we observe
that the vertex set of a
$\left(B_{X_m}(n),\alpha(n),\beta(n)\right)$-graph is a
$\beta(n)$-dense subset of $B_{X_m}(n)$. Furthermore, by Lemma
\ref{L:CAlphaBeta}, the graph distance on this set is
$\max\left\{3,\frac{3\beta(n)}{\alpha(n)}\right\}$-equivalent to
the metric inherited from $X$. This proves Lemma
\ref{L:GraphsToSubspaces}.\medskip

Lemmas \ref{L:CAlphaBeta} and \ref{L:GraphsToSubspaces} show that
a Banach space $Y$ admits uniformly isomorphic embeddings of $X_m$
if and only if $Y$ admits uniformly bilipschitz embeddings of some
(or any) collection of
$\left(B_{X_m}(n),\alpha(n),\beta(n)\right)$-graphs
$m,n\in\mathbb{N}$, where $\alpha(n)\le\beta(n)$,
$\lim_{n\to\infty} \alpha(n)=0$, $\lim_{n\to\infty}\beta(n)=0$,
and $\sup_n\frac{\beta(n)}{\alpha(n)}<\infty$. This statement does
not complete the proof of Theorem \ref{T:Ost11+} (even of a weaker
version of it, with $3$ replaced by any other uniform bound on
degrees of $\{H_n\}_{n=1}^\infty$). In fact, it is easy to see
that if $\sup_m\dim X_m=\infty$, the degrees of any collection of
$\left(B_{X_m}(n),\alpha(n),\beta(n)\right)$-graphs,
$m,n\in\mathbb{N}$, where $\alpha(n)\le\beta(n)$,
$\lim_{n\to\infty} \alpha(n)=0$, $\lim_{n\to\infty}\beta(n)=0$,
and $\sup_n\frac{\beta(n)}{\alpha(n)}<\infty$, are unbounded.

\begin{observation}\label{O:IntoInfDim}
Since we assumed $\sup_m\dim X_m=\infty$, each Banach space
admitting uniformly isomorphic embeddings of $X_m$ has to be
infinite-dimensional. For this reason, in order to prove Theorem
\ref{T:Ost11+} it suffices to show that for each
finite-dimensional Banach space $X$, each $(B_X(r),\delta)$-graph
$G$ $(0<\delta<r<\infty)$, and each infinite-dimensional Banach
space $Z$ containing $X$ as a subspace, there exist a graph $H$
and bilipschitz embeddings $\psi:G\to H$ and $\varphi:H\to Z$,
such that distortions of $\psi$ and $\varphi$ are bounded from
above by absolute constants and the maximum degree of $H$ is $3$.
It is clear that it is enough to consider the case $\delta=1$.
\end{observation}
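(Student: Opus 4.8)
\noindent The plan is to obtain Theorem~\ref{T:Ost11+} by combining Lemmas~\ref{L:CAlphaBeta} and~\ref{L:GraphsToSubspaces} with the (as yet unproved) claim, keeping track of distortions and of the fact that the graphs $\{H_n\}$ must not depend on $Y$. The statement itself bundles three separate things: (i) that every Banach space admitting uniformly isomorphic embeddings of $\{X_m\}$ is infinite-dimensional; (ii) the reduction of Theorem~\ref{T:Ost11+} to the claim; and (iii) the normalization $\delta=1$.

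For (i): given uniformly isomorphic embeddings $T_m\colon X_m\to Y$ with $D:=\sup_m\|T_m\|\cdot\|T_m^{-1}|_{T_m(X_m)}\|<\infty$, one has $\dim Y\ge\dim T_m(X_m)=\dim X_m$ for every $m$, so $\sup_m\dim X_m=\infty$ (Remark~\ref{R:InfDim}) forces $\dim Y=\infty$; moreover $Y$ then contains, for each $m$, the subspace $T_m(X_m)$, which is $D$-isomorphic to $X_m$. For (iii): the dilation $x\mapsto x/\delta$ carries a $\delta$-separated $\delta$-dense subset of $B_X(r)$ bijectively onto a $1$-separated $1$-dense subset of $B_X(r/\delta)$ and multiplies every norm by $1/\delta$, so it preserves the relation ``$\|u-v\|\le 3\delta$'' and identifies a $(B_X(r),\delta)$-graph with a $(B_X(r/\delta),1)$-graph as a metric space; hence the claim for $\delta=1$, applied with radius $r/\delta$, yields it for every $\delta$.

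For (ii): I would fix, for each $m\in\mathbb N$ and each integer $n\ge2$, a $(B_{X_m}(n),1/n)$-graph $G_{m,n}$ (finite, since $X_m$ is finite-dimensional). Lemma~\ref{L:GraphsToSubspaces} applied with $\alpha(n)=\beta(n)=1/n$, together with the fact from Lemma~\ref{L:CAlphaBeta} that $d_{G_{m,n}}$ is $3$-equivalent to the metric that $V(G_{m,n})$ inherits from $X_m$, gives the equivalence already recorded after Lemma~\ref{L:GraphsToSubspaces}: $Y$ admits uniformly isomorphic embeddings of $\{X_m\}$ if and only if $Y$ admits uniformly bilipschitz embeddings of $\{G_{m,n}\}$ (for one implication restrict each $T_m$ to $V(G_{m,n})$, obtaining distortion $\le3D$; the other is Lemma~\ref{L:GraphsToSubspaces}). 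Then I would apply the claim (for $\delta=1$, hence by (iii) for all $\delta$) to each $G_{m,n}$: it furnishes a graph $H_{m,n}$ of maximum degree $3$ (finite and connected, as the construction will make clear), a bilipschitz embedding $\psi_{m,n}\colon G_{m,n}\to H_{m,n}$, and, for every infinite-dimensional Banach space $Z$ containing a copy of $X_m$, a bilipschitz embedding $\varphi_{m,n}^{Z}\colon H_{m,n}\to Z$, all of distortion at most a constant $C_0$ depending only on $D$. Re-enumerating $\{H_{m,n}\}$ as $\{H_n\}_{n=1}^\infty$, the two implications of Theorem~\ref{T:Ost11+} follow: if $Y$ admits uniformly bilipschitz embeddings $\Phi_{m,n}\colon H_{m,n}\to Y$ of distortion $\le C$, then $\Phi_{m,n}\circ\psi_{m,n}$ embeds $G_{m,n}$ into $Y$ with distortion $\le CC_0$, whence $Y$ admits uniformly bilipschitz embeddings of $\{G_{m,n}\}$ and therefore uniformly isomorphic embeddings of $\{X_m\}$; conversely, if $Y$ admits uniformly isomorphic embeddings of $\{X_m\}$, then by (i) $Y$ is infinite-dimensional and contains a copy of each $X_m$, so the claim with $Z:=Y$ gives $\varphi_{m,n}^{Y}\colon H_{m,n}\to Y$ of distortion $\le C_0$, and $Y$ admits uniformly bilipschitz embeddings of $\{H_n\}$.

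The reduction itself presents no real difficulty; the one delicate point is that $\{H_n\}$ must be \emph{fixed} independently of $Y$, so in the claim $H_{m,n}$ has to be built from $G_{m,n}$ alone, while the embedding $\varphi$ into an arbitrary infinite-dimensional host must remain valid — with distortion controlled only by $D$ — even when that host contains merely an isomorphic, not isometric, copy of $X_m$ (a space admitting uniformly isomorphic embeddings of $\{X_m\}$ need not contain $X_m$ isometrically). The substantive obstacle is therefore entirely inside the claim: constructing one maximum-degree-$3$ graph that both receives $G$ bilipschitzly and embeds bilipschitzly into every infinite-dimensional Banach space containing a copy of $X$; this is carried out in the remainder of the section.
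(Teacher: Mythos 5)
Correct, and essentially the paper's own (largely implicit) reduction: infinite-dimensionality from $\sup_m\dim X_m=\infty$, the dilation argument for $\delta=1$, and deriving Theorem~\ref{T:Ost11+} by composing the claimed embeddings $\psi,\varphi$ with the equivalence coming from Lemmas~\ref{L:CAlphaBeta} and~\ref{L:GraphsToSubspaces}. The two caveats you flag --- that $H$ must be built from $G$ alone, and that $\varphi$ must work when the host contains only a $D$-isomorphic (not isometric) copy of $X_m$, with distortion controlled by $D$ rather than an absolute constant --- are precisely how the paper's subsequent construction delivers the claim (Lemma~\ref{L:TGintoOver} bounds the distortion in terms of the distortion of $\tau$), so they are accurate refinements of the Observation's literal wording rather than gaps in your argument.
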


\begin{remark} Observation \ref{O:IntoInfDim} is the main step
towards simplification of the proof of Theorem \ref{T:Ost11+}
given in \cite{Ost11d}: in \cite{Ost11d} the graph $H$ was
embedded into $X$ (if $\dim X\ge 3$). This is substantially more
difficult and, as we see from Observation \ref{O:IntoInfDim}, is
not needed to prove Theorem \ref{T:Ost11+}. However, the proof of
\cite[Section 4]{Ost11d} could be of independent interest, see
Remark \ref{R:Thickening} below.
\end{remark}

The construction of $H$ which we use is the same as in
\cite{Ost11d}: We introduce the graph $MG$ as the following
``expansion'' of $G$: we replace each edge in $G$ by a path of
length $M$. It is clear that the graph $MG$ is well-defined for
each $M\in\mathbb{N}$. In our construction of $H$ the number $M$
will be chosen to be much larger than the number of edges of $G$.
We use the term {\it long paths} for the paths of length $M$ which
replace edges of $G$. Next step in the construction of $H$: For
each vertex $v$ of $G$, we introduce a path $p_v$ in the graph $H$
whose length is equal to the number of edges of $G$, we call each
such path a {\it short path}. At the moment these paths do not
interact. We continue our construction of $H$ in the following
way. We label vertices of short paths in a monotone way by long
paths. ``In a monotone way'' means that the first vertex of {\bf
each} short path corresponds to the long path $p_1$, the second
vertex of {\bf each} short path corresponds to the long path $p_2$
etc. We complete our construction of $H$ introducing, for a long
path $p$ in $MG$ corresponding to an edge $uv$ in $G$, a path of
the same length in $H$ (we also call it {\it long}) which joins
those vertices of the short paths $p_u$ and $p_v$ which have label
$p$. There is no further interaction between short and long paths
in $H$. It is obvious that the maximum degree of $H$ is $3$.
\medskip

It remains to define embeddings $\psi$ and $\varphi$ and to
estimate their distortions.\medskip

To define $\psi$ we pick a long path $p$ in $MG$ (in an arbitrary
way) and map each vertex $u$ of $G$ onto the vertex in $H$ having
label $p$ in the short path $p_u$ corresponding to $u$. The
estimates for $\lip(\psi)$ and $\lip(\psi^{-1})$ given below are
taken from \cite{Ost11d}. We reproduce them because they do not
take much space.\medskip

We have $\lip(\psi)\le 2e(G)+M$, where $e(G)$ is the number of
edges of $G$. In fact, to estimate the Lipschitz constant it
suffices to find an estimate from above for the distances in $H$
between $\psi(u)$ and $\psi(v)$ where $u$ and $v$ are adjacent
vertices of $G$. To see that $2e(G)+M$ provides the desired
estimate we consider the following three-stage walk from $\psi(u)$
from $\psi(v)$:
\begin{itemize}
\item We walk from $\psi(u)$ along the short path $p_u$ to the
vertex labelled by the long path corresponding to the edge $uv$ in
$G$. \item Then we walk along the corresponding long path to its
end in $p_v$. \item We conclude the walk with the piece of the
short path $p_v$ which we need to traverse in order to reach
$\psi(v)$.
\end{itemize}
\smallskip

We claim that $\lip(\psi^{-1})\le M^{-1}$. This gives an absolute
upper bound for the distortion of $\psi$ provided the quantity
$e(G)$ is controlled by $M$, we need $M$ to be much larger than
$e(G)$ only if we would like to make the distortion close to $1$.
To prove $\lip(\psi^{-1})\le M^{-1}$ we let $\psi(u)$ and
$\psi(v)$ be two vertices of $\psi(V(G))$. We need to estimate
$d_G(u,v)$ from below in terms of $d_H(\psi(u),\psi(v))$. Let
\[P=\psi(u),w_1,\dots,w_{n-1},\psi(v)\] be one of the shortest
$\psi(u)\psi(v)$-paths in $H$. Let $u,u_1,\dots, u_{k-1},v$ be
those vertices of $G$ for which the path $P$ visits the
corresponding short paths $p_u,p_{u_1},\dots,p_{u_{k-1}},p_v$. We
list $u_1,\dots,u_{k-1}$ in the order of visits. It is clear that
in such a case the sequence $u,u_1,\dots,u_{k-1},v$ is a $uv$-walk
in $G$. Therefore $d_G(u,v)\le k$. On the other hand, in $H$, to
move from one short path to another, one has to traverse at least
$M$ edges, therefore $d_H(\psi(u),\psi(v))\ge kM$. This implies
$\lip(\psi^{-1})\le M^{-1}$.\medskip

Our next purpose is to introduce $\varphi:H\to Z$. First we prove
(Lemma \ref{L:TGintoOver} below) that there is a bilipschitz
embedding of $MG$ into some finite-dimensional subspace $W$ of $Z$
with distortion bounded by an absolute constant.\medskip

It is convenient to handle all $M\in \mathbb{N}$ simultaneously by
considering the following thickening of the graph $G$ (see
\cite[Section 1.B]{Gro93} for the general notion of thickening).
For each edge $uv$ in $G$ we join $u$ and $v$ with a set isometric
to $[0,1]$, we denote this set $t(uv)$. The {\it thickening} $TG$
of $G$ is the union of all sets $t(uv)$ (such sets can intersect
at their ends only) with the distance between two points defined
as the length of the shortest curve joining the points.\medskip

\begin{lemma}\label{L:TGintoOver} If a finite unweighted  graph $G$ endowed with
its graph distance admits a bilipschitz embedding $\tau$ into a
finite-dimensional Banach space $X$, then the graph $TG$ admits a
bilipschitz embedding $f$ into any infinite-dimensional Banach
space $Z$ containing $X$ as a subspace, and the distortion of $f$
is bounded in terms of the distortion of $\tau$ and some absolute
constants.
\end{lemma}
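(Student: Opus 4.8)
We sketch the argument. The plan is to extend the vertex embedding $\tau$ to a map $f\colon TG\to Z$ by routing each thickened edge $t(uv)$ along a ``tent'' in $Z$ that joins $\tau(u)$ to $\tau(v)$ and bulges into a fresh direction $z_{uv}$ which is almost disjoint from $X$ and from the tents of all the other edges. The infinite-dimensionality of $Z$ is precisely what makes such bulging directions available, and the finitely many edges of $G$ is what lets us choose them all.

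First I would rescale $\tau$ so that $d_G(u,v)\le\|\tau(u)-\tau(v)\|\le D\,d_G(u,v)$ for all $u,v\in V(G)$, where $D$ denotes the distortion of $\tau$ (rescaling does not change the distortion), and enumerate the edges of $G$ as $e_1,\dots,e_N$. Applying Riesz's lemma repeatedly inside the infinite-dimensional space $Z$, I would choose unit vectors $z_1,\dots,z_N$ with $\dist(z_j,\,X+\lin\{z_1,\dots,z_{j-1}\})\ge\tfrac12$ for every $j$. The elementary fact I would isolate as a sublemma is that then $\|x+a z_{e_i}+b z_{e_j}\|\ge c_0\max\{\|x\|,|a|,|b|\}$ for every $x\in X$, every $i\ne j$, all scalars $a,b$, and an absolute constant $c_0>0$ (one may take $c_0=\tfrac19$): one peels off first the detour vector of larger index and then the other, each step costing only a bounded factor. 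What keeps $c_0$ independent of $N$ is that every difference $f(x)-f(y)$ will involve at most two of the $z_j$'s, so the peeling chain never has length more than two; this is the one point on which the whole estimate turns.

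I would then set $f(w)=\tau(w)$ on vertices and, on $t(e)$ with $e=uv$ parametrized affinely by $s\in[0,1]$ (the $TG$-distance from the point to $u$), put $f_e(s)=(1-s)\tau(u)+s\tau(v)+\mu(s)z_e$, where $\mu(s)=2\min\{s,1-s\}$ is the unit tent function. On each half-edge $f_e$ is affine with speed $\|\tau(v)-\tau(u)\pm 2z_e\|\in[1,D+2]$, and $f$ is continuous on the length space $TG$, so $\lip(f)\le D+2$. For the lower bound I would argue by cases. For two vertices it is the normalization. If $x$ and $y$ lie on one edge, $f$ is affine on each half-edge and passing to the quotient $Z/\lin\{z_e\}$, together with $\dist(\tau(v)-\tau(u),\lin\{z_e\})\ge\tfrac13$ (which follows from $\|\tau(v)-\tau(u)\|\ge 1$ and $\dist(z_e,X)\ge\tfrac12$), gives $\|f(x)-f(y)\|\ge\tfrac13 d_{TG}(x,y)$. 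In the remaining cases ($x$ interior and $y$ a vertex, or $x,y$ interior to distinct edges $e,e'$) I would write $f(x)-f(y)=[f_W(x)-f_W(y)]+\mu(s)z_e-\mu(s')z_{e'}$, with $f_W(x)=(1-s)\tau(u)+s\tau(v)\in X$ and similarly for $f_W(y)$ (reading $f_W(y)=\tau(y)$, $\mu(s')=0$ when $y$ is a vertex), invoke the sublemma to get $\|f(x)-f(y)\|\ge c_0\max\{\|f_W(x)-f_W(y)\|,\mu(s),\mu(s')\}$, observe that $\|f_W(x)-\tau(n(x))\|\le D\,m(x)$ where $n(x)$ is the vertex of the edge nearest $x$ and $m(x)=d_{TG}(x,n(x))$, and combine with the route estimate $d_{TG}(x,y)\le m(x)+d_G(n(x),n(y))+m(y)$. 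A short dichotomy — whether $d_G(n(x),n(y))$ exceeds $2D(m(x)+m(y))$ or not — then yields $\|f(x)-f(y)\|\ge c_1(D)\,d_{TG}(x,y)$. Hence $f$ is bilipschitz with distortion bounded in terms of $D$ and absolute constants, as required.

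The hardest part is the last case, and specifically the requirement that the lower bilipschitz constant not depend on the size of $G$; this is exactly what the sublemma secures, since however many bulging directions there are, only two of them ever occur in one difference $f(x)-f(y)$, so the Riesz-lemma estimates never chain more than twice and $c_0$, hence the final distortion bound, stays absolute. Finally, the embedding of $MG$ into a finite-dimensional subspace of $Z$ claimed before the lemma is obtained by restricting $f$ to the points at parameters $0,\tfrac1M,\dots,1$ on the thickened edges, whose image lies in $\lin(X\cup\{z_1,\dots,z_N\})$.
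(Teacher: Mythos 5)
Your proof is correct, and its core coincides with the paper's: you extend $\tau$ by the very same tent map $s\mapsto(1-s)\tau(u)+s\tau(v)+2\min\{s,1-s\}\,b_{uv}$, one bump direction per edge transverse to $X$, and you run the same case analysis (same edge versus different edges, with the distance-to-nearest-vertex dichotomy). The genuine divergence is the device that keeps the bump directions from interfering. The paper takes a subspace $W\supset X$ with $\dim(W/X)=e(G)$, lifts an Auerbach basis of the quotient $W/X$ to norm-one vectors $e_{uv}\in W$, and uses the biorthogonal functionals $e^*_{uv}$ (which vanish on $X$ and on all other lifted vectors) to read the bump coefficients off directly, e.g. $\|f(x)-f(y)\|\ge|e^*_{uv}(f(x)-f(y))|=2\alpha$. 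You instead produce the directions by iterating Riesz's lemma inside the infinite-dimensional $Z$ and replace the functionals by your peeling sublemma $\|x+az_{e_i}+bz_{e_j}\|\ge c_0\max\{\|x\|,|a|,|b|\}$; your observation that any difference $f(x)-f(y)$ involves at most two bump vectors, so the almost-orthogonality estimates never chain and $c_0$ stays absolute, is exactly what substitutes for biorthogonality, and your quotient-norm argument with $\dist(\tau(v)-\tau(u),\lin\{z_e\})\ge\frac13$ replaces the functional in the same-edge case. Both routes give a distortion bound in terms of $D$ and absolute constants; the Auerbach route buys exact coefficient extraction and cleaner constants, while yours is slightly more self-contained (no quotient lift, only Riesz's lemma) at the price of the extra sublemma and somewhat cruder constants (your claimed $c_0=\tfrac19$ may need to be relaxed, but any absolute constant suffices). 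Your closing remark that restricting $f$ to the parameters $0,\tfrac1M,\dots,1$ handles $MG$ inside the finite-dimensional subspace $\lin(X\cup\{z_1,\dots,z_N\})$ matches how the paper uses the lemma.
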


\begin{remark}\label{R:Thickening} In \cite[Section 4]{Ost11d} a
stronger result was proved, namely, it was proved that in the case
where $\dim X\ge 3$, the bilipschitz embedding $f$ whose existence
is claimed in Lemma \ref{L:TGintoOver} can be required to map $TG$
into $X$. As we shall see, this result is not needed for Theorem
\ref{T:Ost11+}. However, it could be of independent interest.
\end{remark}

\begin{proof}[Proof of Lemma \ref{L:TGintoOver}] We may assume without loss of generality that
$\lip(\tau^{-1})=1$, that is, $||\tau(u)-\tau(v)||\ge d_G(u,v)$.
We construct a bilipschitz embedding of $TG$ into an (arbitrary)
Banach space $W$ containing $X$ as a subspace and satisfying $\dim
(W/X)=e(G)$, where $e(G)$ is the number of edges in $G$ and $W/X$
is the quotient space.\medskip

We find an Auerbach basis in $W/X$. Recall the definition. Let
$\{x_i\}_{i=1}^n$ be a basis in an $n$-dimensional Banach space
$Y$, its {\it biorthogonal functionals} are defined by
$x_i^*(x_j)=\delta_{ij}$ (Kronecker delta). The basis
$\{x_i\}_{i=1}^n$ is called an {\it Auerbach basis} if
$||x_i||=||x^*_i||=1$ for all $i\in\{1,\dots,n\}$. This notion
goes back to \cite{Aue30}. See \cite[Section 2]{Ost11b} and
\cite{Pli95} for historical comments and proofs.\medskip

Since the cardinality of the Auerbach basis is equal to the number
of edges in $G$, we label its elements by edges. Also we lift the
elements of this Auerbach basis into $W$. Since $W$ is
finite-dimensional, we may assume that the norms of the lifted
elements are also equal to $1$. We use the notation
$\{e_{uv}\}_{uv\in E(G)}$ for the lifted elements of the Auerbach
basis and the notation $\{e_{uv}^*\}_{uv\in E(G)}$ for its
biorthogonal system. It is clear that $\{e_{uv}^*\}_{uv\in E(G)}$
may be regarded as elements of $W^*$.\medskip

If $uv$ is an edge in $G$, we map $t(uv)$ onto the concatenation
of two line segments in $W\subset Z$, namely, onto the
concatenation of
$\left[\tau(u),\frac{\tau(u)+\tau(v)}2+e_{uv}\right]$ and
$\left[\frac{\tau(u)+\tau(v)}2+e_{uv},\tau(v)\right]$. More
precisely, we map the point in $t(uv)$ which is at distance
$\alpha\in[0,1]$ from $u$ onto the point
\begin{equation}\label{E:f}\alpha\tau(v)+(1-\alpha)\tau(u)+\min\{2\alpha,2(1-\alpha)\}e_{uv}.\end{equation}
It is easy to check that such points cover the concatenation of
the line segments
$\left[\tau(u),\frac{\tau(u)+\tau(v)}2+e_{uv}\right]$ and
$\left[\frac{\tau(u)+\tau(v)}2+e_{uv},\tau(v)\right]$. We denote
this map by $f:TG\to W$.\medskip

We claim that $f$ is a bilipschitz embedding and its distortion is
bounded in terms of distortion of $\tau$ and some absolute
constant. To estimate $\lip(f)$ observe that the derivative of the
function in \eqref{E:f} with respect to $\alpha$ is
$\tau(v)-\tau(u)\pm 2e_{uv}$ (at points where the derivative is
defined). Hence $\lip(f)\le\lip(\tau)+2$.
\medskip

To estimate the Lipschitz constant of $f^{-1}$, we need to
estimate from above the quotient
\[\frac{d_{TG}(x,y)}{||f(x)-f(y)||},\]
where $x,y\in TG$. Let $\alpha$ be the distance in $TG$ from $x\in
t(uv)$ to $u$ and let $\beta$ be the distance from $y\in t(wz)$ to
$w$. We may choose our notation in such a way that
$\alpha,\beta\le\frac12$. Let $D=d_{TG}(x,y)$.

First we consider the case when the edges $uv$ and $wz$ are
different. We have $d_{TG}(u,w)\ge D-\alpha-\beta$. Thus
$||f(u)-f(w)||=||\tau(u)-\tau(w)||\ge D-\alpha-\beta$ and
$||f(x)-f(y)||\ge D-(\lip(\tau)+3)(\alpha+\beta)$. Here we use the
fact that, since $\lip(f)\le\lip(\tau)+2$, we have
$||f(x)-f(u)||\le(\lip(\tau)+2)\alpha$ and $||f(y)-f(w)||\le
(\lip(\tau)+2)\beta$. On the other hand $||f(x)-f(y)||\ge
e^*_{uv}(f(x)-f(y))=e^*_{uv}(f(x)-f(u))+e^*_{uv}(f(u)-f(y))=e^*_{uv}(f(x)-f(u))=
2\alpha$ (we use $uv\ne wz$, the definition of $\alpha$ and the
fact that $\{e_{uv}\}$ is a lifted Auerbach basis in $W/X$).
Similarly we get $||f(x)-f(y)||\ge2\beta$. Therefore

\[\frac{d_{TG}(x,y)}{||f(x)-f(y)||}\le\min\left
\{\frac{D}{\max\{0,D-(\lip(\tau)+3)(\alpha+\beta)\}},
\frac{D}{2\alpha},\frac{D}{2\beta}\right\}.\] It is easy to see
that the minimum in this inequality is bounded from above in terms
of $\lip(\tau)$ and an absolute constant. If $\alpha=0$, or
$\beta=0$, or both, we modify this argument in a straightforward
way.\medskip

It remains to consider the case where $x,y\in t(uv)$. Let
$d_{TG}(x,u)=\alpha$ and $d_{TG}(y,u)=\beta$, so
$d_{TG}(x,y)=|\alpha-\beta|$. It is easy to see that
\[|e^*_{uv}(f(x)-f(y))|=\begin{cases} 2\,|\alpha-\beta| & \hbox{ if
$\alpha$ and $\beta$ are on the same side of $\frac12$}\\
2\,|1-\alpha-\beta| & \hbox{ otherwise.} \end{cases}\] In the
former case  we get
\[\frac{d_{TG}(x,y)}{||f(x)-f(y)||}\le\frac 12.\]
If the latter case we use
\[f(x)-f(y)=(\alpha-\beta)(\tau(v)-\tau(u))\pm
2(1-\alpha-\beta)e_{uv}.\] We get from here that
\[||f(x)-f(y)||\ge \max\{2|1-\alpha-\beta|,
|\alpha-\beta|||\tau(u)-\tau(v)||-2|1-\alpha-\beta|\}.\] The
desired estimate follows.
\end{proof}

This proof shows that there exists an embedding $\varphi_0:MG\to
W$ such that $\lip(\varphi_0)\le 1$ and $\lip(\varphi_0^{-1})$ is
bounded from above by an absolute constant. The rest of the proof
is quite similar to the proof in \cite{Ost11d}, we only replace
the embedding $\varphi_0:MG\to X$ constructed in \cite{Ost11d} by
the embedding $\varphi_0:MG\to W$ constructed in Lemma
\ref{L:TGintoOver}. For convenience of the reader we reproduce
this part of the proof with necessary modifications. We number
vertices along short paths using numbers from $1$ to $e(G)$ in
such a way that vertices numbered $1$ correspond to the same long
path in the correspondence described above.
\medskip

At this point we are ready to describe the action of the map
$\varphi$ on vertices of short paths. We construct the map
$\varphi$ as a map into the Banach space $W\oplus_1\mathbb{R}$.
This is enough because $W\oplus_1\mathbb{R}$ admits a linear
bilipschitz embedding into $Z$ with distortion bounded by an
absolute constant.
\medskip

For vertex $w$ of $H$ having number $i$ on the short path $p_u$
the image of $w$ in $W\oplus_1\mathbb{R}$ is
$\varphi(w)=\varphi_0(u)\oplus i$ (here we use the same notation
$u$ both for a vertex of $G$ and the corresponding vertex in
$MG$).
\medskip

To map vertices of long paths of $H$ into $W\oplus_1\mathbb{R}$ we
observe that the numbering of vertices of short paths leads to a
one-to-one correspondence between long paths and numbers
$\{1,\dots,e(G)\}$. We define the map $\varphi$ on a long path
corresponding to $i$ by $\varphi(w)=\varphi_0(w')\oplus i$, where
$w'$ is the uniquely determined vertex in a long path of $MG$
corresponding to a vertex $w$ in a long path of $H$.
\medskip

The fact that $\lip(\varphi)\le 1$ follows immediately from the
easily verified claim that the distance between $\varphi$-images
of adjacent vertices of $H$ is at most $1$ (here we use
$\lip(\varphi_0)\le 1$).\medskip

We turn to an estimate of $\lip(\varphi^{-1})$. In this part of
the proof we assume that $M>2e(G)$. Let $w$ and $z$ be two
vertices of $H$. As we have already mentioned our construction
implies that there are uniquely determined corresponding vertices
$w'$ and $z'$ in $MG$.\medskip

Obviously there are two possibilities:
\smallskip

(1) $d_{MG}(w',z')\ge\frac12\,d_H(w,z)$. In this case we observe
that the definitions of $\varphi$ and of the norm on
$W\oplus_1\mathbb{R}$ imply that
\[||\varphi(w)-\varphi(z)||\ge||\varphi_0(w')-\varphi_0(z')||\ge
d_{MG}(w',z')/\lip(\varphi_0^{-1})\ge\frac12d_H(w,z)/\lip(\varphi_0^{-1}).\]

(2) $d_{MG}(w',z')<\frac12\,d_H(w,z)$. This inequality implies
that there is a path joining $w$ and $z$ for which the naturally
defined {\it short-paths-portion} is longer than the {\it
long-paths-portion}. The inequality $M>2e(G)$ implies that the
short-paths-portion of this path consists of one path of length
$>\frac12d_H(w,z)$. This implies that the difference between the
second coordinates of $w$ and $z$ in the decomposition
$W\oplus_1\mathbb{R}$ is $>\frac12d_H(w,z)$. Thus
$||\varphi(w)-\varphi(z)|| >\frac12d_H(w,z)$.
\medskip

Since $\lip(\varphi_0^{-1})\ge 1$ (this follows from the
assumption $\lip(\varphi_0)\le 1$), we get $\lip(\varphi^{-1})\le
2\lip(\varphi_0^{-1})$ in each of the cases (1) and (2). The proof
of Theorem \ref{T:Ost11+} is completed.
\medskip

\section{Metric characterization with one
test-space}\label{S:OneSpace}

Bourgain \cite{Bou86} proved that a Banach space is
nonsuperreflexive if and only if it admits uniformly bilipschitz
embeddings of binary trees of all finite depths (see
\cite[pp.~412, 436]{BL00} for the definition and equivalent
characterizations of superreflexivity). Baudier \cite{Bau07}
strengthened the ``only if'' part of this result by proving that
each nonsuperreflexive Banach space admits a bilipschitz embedding
of an infinite binary tree.
\medskip

Our purpose in this section is to find similar
one-test-space-characterizations for classes of Banach spaces
defined in terms of excluded finite-dimensional subspaces. At this
moment we do not know how to do this for an arbitrary sequence of
finite-dimensional subspaces, we found such a characterization
only for increasing sequences of finite-dimensional subspaces.

\begin{theorem}\label{T:OneSpace} Let $\{X_n\}_{n=1}^\infty$ be an increasing
sequence of finite-dimensional Banach spaces with dimensions going
to $\infty$. Then there exists an infinite graph $G$ such that the
following conditions are equivalent:

\begin{itemize}

\item $G$ admits a bilipschitz embedding into a Banach space $X$.

\item The spaces $\{X_n\}$ admit uniformly isomorphic embeddings
into $X$.

\end{itemize}
\end{theorem}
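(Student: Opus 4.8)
The plan is to reduce Theorem~\ref{T:OneSpace} to Theorem~\ref{T:Ost11+} by ``gluing'' the finitely many graphs $\{H_n\}_{n=1}^\infty$ produced there into a single infinite graph $G$, using the hypothesis that $\{X_n\}$ is increasing to arrange the pieces coherently. First I would apply Theorem~\ref{T:Ost11+} to the sequence $\{X_n\}_{n=1}^\infty$, obtaining finite connected graphs $\{H_n\}_{n=1}^\infty$ of maximum degree $3$ such that a Banach space $X$ admits uniformly isomorphic embeddings of $\{X_n\}$ if and only if it admits uniformly bilipschitz embeddings of $\{H_n\}$. The target graph $G$ will be built so that each $H_n$ embeds isometrically (or with absolute distortion) into $G$, and conversely a bilipschitz embedding of $G$ restricts to uniformly bilipschitz embeddings of all the $H_n$; then the two bullet conditions both become ``$X$ admits uniformly bilipschitz embeddings of $\{H_n\}$,'' and we are done.

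The key steps, in order: (1) Normalize the $H_n$: by rescaling (the metric characterization is insensitive to uniform scaling) we may assume $\diam H_n$ grows, say $\diam H_{n+1}\ge 3\diam H_n$. (2) Form $G$ by taking the disjoint union $\bigsqcup_n H_n$ and connecting consecutive pieces by a single edge (or a short path) joining a fixed basepoint of $H_n$ to a fixed basepoint of $H_{n+1}$; since each $H_n$ has max degree $3$ we must pick basepoints of degree $\le 2$, or subdivide one incident edge first, so that adding the connecting edges keeps $\Delta(G)=3$. (3) Forward direction: if $\{X_n\}$ embed uniformly isomorphically into $X$, then by Lemmas~\ref{L:CAlphaBeta}–\ref{L:GraphsToSubspaces} and the construction of Section~\ref{S:Houston} the graphs $\{H_n\}$ embed uniformly bilipschitzly into $X$; I then assemble these into a single bilipschitz embedding of $G$ — this is where the diameter-growth normalization is used, so that the ``bridges'' between the $H_n$ can be mapped to segments whose lengths dominate the accumulated error and the images of distinct blocks stay far apart, giving a global lower bound. (4) Reverse direction: a bilipschitz embedding $\varphi:G\to X$ restricts on each $H_n\subset G$ to a bilipschitz embedding with distortion bounded by that of $\varphi$ (an isometric subgraph inclusion changes distortion by at most a bounded factor — one must check that $d_G$ restricted to $V(H_n)$ equals $d_{H_n}$, which holds because the bridges are cut edges), hence the $H_n$ embed uniformly bilipschitzly into $X$, hence (Theorem~\ref{T:Ost11+}) the $X_n$ embed uniformly isomorphically.

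The main obstacle I anticipate is the forward direction, specifically producing a \emph{single} embedding of the infinite graph $G$ with one uniform distortion constant, rather than a sequence of embeddings with uniformly bounded distortions. One has a uniformly bounded family $\varphi_n:H_n\to X$, each with its own scaling factor $r_n$ from \eqref{E:MapDist}; to splice them one must translate the images apart in $X$ and choose the connecting paths' images to be honest geodesic segments of the right length, all while controlling the lower Lipschitz estimate for pairs of points lying in different blocks $H_n,H_m$ — this is exactly the kind of ``concatenation with separation'' estimate that appears in Baudier's passage from finite to infinite binary trees \cite{Bau07}, and the increasing-dimension / increasing-diameter hypothesis is what makes the geometric series of errors summable. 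A secondary technical point is the degree bookkeeping in step (2): ensuring $G$ still has maximum degree $3$ after inserting the bridges, which is handled by a harmless subdivision of one edge at each chosen basepoint.
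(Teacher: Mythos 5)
Your reverse direction is sound (the bridges are cut edges, so $d_G$ restricted to each block is $d_{H_n}$, and Theorem~\ref{T:Ost11+} applies), but the forward direction --- which you yourself flag as the main obstacle --- is a genuine gap, not a technicality that the diameter normalization repairs. Given uniformly bilipschitz $\varphi_n:H_n\to X$ (you may indeed normalize the scaling factors $r_n$ to $1$), there is a built-in tension in the splicing: if you translate the images of consecutive blocks far apart in $X$, the bridge edges of $G$ (length $1$, or any length fixed in advance) are stretched by an unbounded amount and the Lipschitz upper bound is lost; if you keep consecutive images close so that bridges map to short segments, then nothing prevents the image of $H_m$ from ``folding back'' close to the image of $H_n$ for $m>n+1$ (or even for $m=n+1$, at points deep inside the blocks), since the maps $\varphi_n$ and $\varphi_m$ are completely unrelated isometrically --- so the lower bound for cross-block pairs fails. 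This is exactly the difficulty that makes Baudier's finite-to-infinite passage \cite{Bau07} and the finite-determination theorem of \cite{Ost11c} nontrivial theorems rather than routine concatenation estimates, and your sketch does not supply the missing mechanism. Note also that simply invoking \cite{Ost11c} (since your $G$ is locally finite) does not close the gap: that theorem requires \emph{all finite subsets} of $G$, in particular the finite chains $H_1\cup\cdots\cup H_N$ with their bridges, to embed into $X$ with uniformly bounded distortion, and uniform embeddability of the individual $H_n$ does not give this --- it is the same splicing problem for finitely many blocks.

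For comparison, the paper avoids this by not gluing abstract graphs at all: it builds $G$ from the start as a $(C,1,2)$-graph (Definition~\ref{D:CAlphaBeta}) of a net $V=\bigcup_n V_n$ of a convex set $C\subset L\oplus_1\mathbb{R}$, where $L=\bigcup_n X_n$, arranged so that each initial chunk $V_n$ lies in $X_n\oplus_1\mathbb{R}$ and, by Lemma~\ref{L:CAlphaBeta}, carries a metric uniformly equivalent to the norm metric. Then uniform embeddability of the chunks $V_n$ into $X$ follows directly from uniform isomorphic embeddability of the $X_n$, local finiteness plus \cite{Ost11c} yields one global embedding of $G$, and the reverse direction uses the embedded $(B_{X_n}(m),1,2)$-graphs together with the discretization consequence (Lemma~\ref{L:ConseqDiscr}). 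The increasing-sequence hypothesis is used precisely to make the whole graph live inside the single space $L\oplus_1\mathbb{R}$, which is what makes the ``finite chunks embed uniformly'' step automatic --- the step your construction is missing. If you want to salvage your approach, you would need to realize your chained graph inside $L\oplus_1\mathbb{R}$ (placing the $n$-th block inside $X_n\oplus_1\mathbb{R}$ at a controlled height) and verify the cross-block lower bounds there, at which point you have essentially reconstructed the paper's argument.
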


\begin{remark} Theorem \ref{T:OneSpace} is obviously weaker that
Theorem \ref{T:OneDeg3} proved in Section \ref{S:Deg3}. We give an
independent proof of Theorem \ref{T:OneSpace} because it is
substantially simpler.\end{remark}

\begin{proof}[Proof of Theorem \ref{T:OneSpace}]
Our proof is based on the construction of graphs providing
approximate descriptions of convex sets, see Definition
\ref{D:CAlphaBeta}. We use the following immediate consequence of
Theorem \ref{T:discr}:

\begin{lemma}\label{L:ConseqDiscr} Let $X$ be a finite-dimensional Banach space, $Y$
be a Banach space admitting uniformly bilipschitz embeddings of
some $(B_X(n),1)$-graphs, and $C\in[1,\infty)$ be an upper bound
for distortions of these embeddings. Then for each $\ep>0$ there
is a linear embedding $T:X\to Y$ satisfying
$||T||\cdot||T^{-1}||\le 3(1+\ep)C$.
\end{lemma}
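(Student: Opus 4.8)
The plan is to obtain the statement directly from the discretization Theorem~\ref{T:discr}: after rescaling, the vertex set of a $(B_X(n),1)$-graph becomes a fine net in $B_X$, and composing the (absolute-distortion) inclusion of this net into $X$ with the given embedding into $Y$ puts us exactly in the situation of that theorem.

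First I would fix $\gamma\in(0,1)$ with $\frac{1}{1-\gamma}\le 1+\ep$ and let $\ep_0=\ep_0(X,\gamma)>0$ be the constant furnished by Theorem~\ref{T:discr} for the space $X$ and this $\gamma$; as noted after Theorem~\ref{T:discr}, the radius of the ball there plays no role, so I may use it with $B_X$ in place of a ball of arbitrary radius. Then I would pick $n\in\mathbb{N}$ with $\frac1n\le\ep_0$, large enough that a $(B_X(n),1)$-graph $G$ together with a bilipschitz embedding $g:(V,d_G)\to Y$ of distortion $\le C$ — say $r\,d_G(u,v)\le\|g(u)-g(v)\|_Y\le rC\,d_G(u,v)$ for some $r>0$ — is among those provided by the hypothesis, where $V\subset B_X(n)$ is the vertex set of $G$ (a $1$-separated $1$-dense subset of $B_X(n)$). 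By Lemma~\ref{L:CAlphaBeta} with $\alpha=\beta=1$ the inclusion $(V,d_G)\hookrightarrow X$ satisfies $\tfrac13\|u-v\|_X\le d_G(u,v)\le\|u-v\|_X$. Setting $W:=\tfrac1n V\subset B_X$, which is $\tfrac1n$-dense and hence $\ep_0$-dense in $B_X$, and $L:W\to Y$, $L(\tfrac1n u):=g(u)$, the preceding inequalities together with $\|u-v\|_X=n\|\tfrac1n u-\tfrac1n v\|_X$ give
\[
\tfrac{rn}{3}\,\big\|\tfrac1n u-\tfrac1n v\big\|_X\ \le\ \big\|L(\tfrac1n u)-L(\tfrac1n v)\big\|_Y\ \le\ rnC\,\big\|\tfrac1n u-\tfrac1n v\big\|_X,
\]
so $L$ is a bilipschitz embedding of the $\ep_0$-dense subset $W$ of $B_X$ (with the metric inherited from $X$) into $Y$ of distortion $\le 3C$.

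Finally I would invoke Theorem~\ref{T:discr}: it produces a linear embedding $T:X\to Y$ with $(1-\gamma)\|T\|\cdot\|T^{-1}\|\le$ (distortion of $L$) $\le 3C$, whence $\|T\|\cdot\|T^{-1}\|\le\frac{3C}{1-\gamma}\le 3(1+\ep)C$, which is the assertion. I do not expect a genuine obstacle here; the only points to watch are that the distortion of the inclusion $(V,d_G)\hookrightarrow X$ is the absolute constant $3$, independent of $n$, so that the composed distortion stays $3C$ instead of degrading as $n\to\infty$, and that dilating a metric space by the scalar $\tfrac1n$ preserves distortions while turning a $1$-dense net of $B_X(n)$ into the $\ep_0$-dense net of $B_X$ needed to apply Theorem~\ref{T:discr}.
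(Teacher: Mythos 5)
Your proof is correct and is essentially the route the paper takes: the paper states Lemma \ref{L:ConseqDiscr} as an immediate consequence of Theorem \ref{T:discr}, obtained exactly as you do by combining Lemma \ref{L:CAlphaBeta} (with $\alpha=\beta=1$, giving the absolute distortion bound $3$ for the natural embedding of the vertex set) with the remark that the radius of the ball in Theorem \ref{T:discr} is irrelevant, i.e.\ with your rescaling by $\tfrac1n$ that turns the $1$-dense vertex set of $B_X(n)$ into an $\ep_0$-dense net of $B_X$ whose composed embedding into $Y$ has distortion at most $3C$.
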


Let $L$ be the inductive limit of the sequence $\{X_n\}$, that is,
$L=\bigcup_{n=1}^\infty X_n$ with its natural vector operations
and the norm whose restriction to each of $X_n$ is the norm of
$X_n$. So $L$ is an incomplete normed space (we can, of course,
consider its completion, but for our purposes completeness is not
needed). We construct $G$ as a graph whose vertex set $V$ is a
countable infinite subset of $L\oplus_1 \mathbb{R}$. (We fix the
$\ell_1$-sum because it is convenient to have a precise formula
for the norm, of course in the bilipschitz category all direct
sums are equivalent.) The main features of the construction are:

\begin{itemize}

\item[(1)] The graph $G$ with its shortest path metric is a
locally finite metric space. (Recall that a metric space is called
{\it locally finite} if all balls of finite radius in it have
finite cardinality.)

\item[(2)] We have $V=\bigcup_{n=1}^\infty V_n$, where $V_n$ are
finite and there exist uniformly bilipschitz embeddings
$f_n:V_n\to (X_n\oplus_1\mathbb{R})$, where we assume that the
distance in $V_n$ is inherited from $G$.

\item[(3)] The set $V$ endowed with its shortest path metric $d_G$
contains images of bilipschitz embeddings of some
$(B_{X_n}(m),1,2)$-graphs (with their shortest path metrics),
$m,n\in\mathbb{N}$, with uniformly bounded distortions.

\end{itemize}

First let us explain why such graph $G$ satisfies the conclusion
of Theorem \ref{T:OneSpace}. Suppose that $X$ is such that the
spaces $\{X_n\}$ admit uniformly isomorphic embeddings into $X$.
Then condition (2) implies that $V_n$ admit uniformly bilipschitz
embeddings into $X$. Since $G$ is locally finite, by the main
result of \cite{Ost11c}, we get that $G$ admits a bilipschitz
embedding into $X$.
\medskip

Now suppose that $G$ admits a bilipschitz embedding into $X$. By
(3) we get that $X$ admits uniformly bilipschitz embeddings of
some $(B_{X_n}(m),1)$-graphs, $m,n\in\mathbb{N}$. Applying Lemma
\ref{L:ConseqDiscr}, we get that the spaces $\{X_n\}$ admit
uniformly isomorphic embeddings into $X$.
\medskip

We construct $V$ as an infinite union $\bigcup_{n=1}^\infty V_n$,
where each $V_n$ is a finite subset of
\[C_n:=\conv\left(\bigcup_{k=1}^n (B_{X_k}(k), s_k)\right)\subset L\oplus_1\mathbb{R},\]
where $s_k\in\mathbb{R}$, the pairs $(z,s_k)$ are in the sense of
the decomposition $L\oplus_1\mathbb{R}$, and $(B_{X_k}(k),
s_k)=\{(z,s_k):~ z\in B_{X_k}(k)\}$. Recall that $B_{X_k}(k)$ is
the centered at $0$ ball of $X_k$ of radius $k$.
\medskip

Now we describe our choice of $V_n$ and $s_n$ such that the
conditions (1)--(3) above are satisfied. One of the requirements
is
\begin{equation}\label{E:NonIntNets}
s_{n+1}-s_n>1.
\end{equation}

We let $s_1=0$ and let $V_1$ to be a $1$-separated $1$-dense
subset of $(B_{X_1}(1), s_1)$ (see Definition
\ref{D:MetrNotions}).\medskip

  The choice of $s_2$ is less
restrictive than further choices. We let $s_2=2$ and let $V_2$ be
the extension of $V_1$ to a $1$-separated $1$-dense subset of
\[C_2:=\conv\left(\bigcup_{n=1}^2 (B_{X_n}(n), s_n)\right)\]
satisfying the condition: some part of $V_2$ is a $1$-dense in
$(B_{X_1}(2), s_2)$, and some part of it is a $1$-dense in
$(B_{X_2}(2), s_2)$ (we use $s_2-s_1>1$).\medskip

We use the following notation for a subsets $A$ of
$L\oplus_1\mathbb{R}$:
\begin{equation}\label{E:NotatInter}
A[a,b]:=A\cap(L\oplus_1[a,b]),\end{equation} where $[a,b]$ is an
interval in $\mathbb{R}$. We use similar notation for open and
half-open intervals.\medskip

Now we turn to the choice of $s_3$. We choose $s_3$ to satisfy
\eqref{E:NonIntNets} and to be so large that $V_2$ is
$\left(1+\frac12\right)$-dense in $C_3[s_1,s_2]$. It is easy to
see that sufficiently large $s_3$ satisfy these conditions. Then
we extend $V_2$ to a $1$-separated $\left(1+\frac12\right)$-dense
in $C_3$ subset $V_3$ in such a way that
\begin{itemize}

\item $(V_3\backslash V_2)\subset C_3(s_2,s_3]$

\item Some parts of $V_3$ are $\left(1+\frac12\right)$-dense in
the sets $(B_{X_1}(3), s_3)$, $(B_{X_2}(3), s_3)$, and
$(B_{X_3}(3), s_3)$, respectively (here we use
\eqref{E:NonIntNets}).

\end{itemize}

We continue in the following way. We pick $s_4$ in such a way that

\begin{itemize}

\item $V_2$ is $\left(1+\frac12+\frac14\right)$-dense in
$C_4[s_1,s_2]$.

\item $V_3$ is $\left(1+\frac12+\frac14\right)$-dense in
$C_4[s_1,s_3]$.

\end{itemize}

Now we extend $V_3$ to a $1$-separated
$\left(1+\frac12+\frac14\right)$-dense subset $V_4$ of $C_4$ in
such a way that
\begin{itemize}

\item $(V_4\backslash V_3)\subset C_4(s_3,s_4]$

\item Some parts of $V_4$ are
$\left(1+\frac12+\frac14\right)$-dense subsets in $(B_{X_1}(4),
s_4)$, $(B_{X_2}(4), s_4)$, $(B_{X_3}(4), s_4)$, and $(B_{X_4}(4),
s_4)$, respectively (here we use \eqref{E:NonIntNets}).

\end{itemize}

We continue in an obvious way: In step $n$ we pick $s_n$,
$s_n-s_{n-1}>1$, in such a way that for each $m=2,\dots,n-1$ we
have:\medskip

$V_m$ is a
$\left(1+\frac12+\frac14+\dots+\left(\frac12\right)^{n-2}\right)$-dense
subset of $C_n[s_1,s_{m}]$.
\medskip

We extend $V_{n-1}$ to a $1$-separated
$\left(1+\frac12+\dots+\left(\frac12\right)^{n-2}\right)$-dense
subset $V_n$ of $C_n$ in such a way that
\begin{itemize}

\item $(V_n\backslash V_{n-1})\subset C_n(s_{n-1},s_n]$

\item Some parts of $V_n$ are
$\left(1+\frac12+\dots+\left(\frac12\right)^{n-2}\right)$-dense
subsets in
\[(B_{X_1}(n), s_n), \dots, (B_{X_n}(n), s_n),\] respectively
(here we use \eqref{E:NonIntNets}).

\end{itemize}
\medskip

Let $V=\bigcup_{n=1}^\infty V_n$ and
\[C=\conv\left(\bigcup_{n=1}^\infty (B_{X_n}(n), s_n)\right)\subset L\oplus_1\mathbb{R}.\]

Our construction implies that $V$ is a $1$-separated $2$-dense
subset of $C$. We let $G$ be the corresponding $(C,1,2)$-graph
(see Definition \ref{D:CAlphaBeta}). It remains to verify that $G$
satisfies the conditions (1)--(3) above.
\medskip

Condition (1). The set $V$ is a locally finite subset of
$L\oplus_1\mathbb{R}$ because it is contained in
$L\oplus_1[s_1,\infty)$ and its intersection with each subset of
the form $L\oplus_1 [s_1,s_n]$ is a finite set $V_n$. The graph
$G$ is locally finite because, by Lemma \ref{L:CAlphaBeta}, its
natural embedding into $L\oplus_1\mathbb{R}$ is bilipschitz.
\medskip

Condition (2). We apply Lemma \ref{L:CAlphaBeta} to $V$, the
corresponding $(C,1,2)$-graph, and $L\oplus\mathbb{R}$. We get
that the natural embedding of $V$ with the metric inherited from
$G$ into $L\oplus\mathbb{R}$ is bilipschitz. Hence its
restrictions to $V_n$ are uniformly bilipschitz. The fact that the
restriction of this map to $V_n$ maps $V_n$ into
$X_n\oplus\mathbb{R}$ follows from the definitions.
\medskip

Condition (3). Our construction of set $V$ is such that it
contains subsets which are $1$-separated $2$-dense subsets in
shifted $B_{X_m}(n)$. We apply Lemma \ref{L:CAlphaBeta} twice.
First time to $V$, the corresponding $(C,1,2)$-graph, and
$L\oplus\mathbb{R}$. Second time we apply it to the set
$(B_{X_m}(n),s_n)$, the corresponding $((B_{X_m}(n),s_n),
1,2)$-graph (vertex set of this graph is the intersection of $V$
with $(B_{X_m}(n),s_n)$). We get that embeddings of all of these
graphs into $L\oplus_1\mathbb{R}$ have uniformly bounded
distortions. Therefore the  metrics of these $((B_{X_m}(n),s_n),
1,2)$-graphs are bilipschitz equivalent to the metrics inherited
from $G$. Hence the condition (3) is also satisfied.
\end{proof}

\section{Characterization in terms of an infinite graph with maximum degree
$3$}\label{S:Deg3}

Our next purpose is to show that the test-space for Theorem
\ref{T:OneSpace} can be chosen to have maximum degree $3$:

\begin{theorem}\label{T:OneDeg3} Let $\{X_n\}_{n=1}^\infty$ be an increasing
sequence of finite-dimensional Banach spaces with dimensions going
to $\infty$. Then there exists an infinite graph $H$ with maximum
degree $3$ such that the following conditions are equivalent:

\begin{itemize}

\item $H$ admits a bilipschitz embedding into a Banach space $X$.

\item The spaces $\{X_n\}$ admit uniformly isomorphic embeddings
into $X$.

\end{itemize}
\end{theorem}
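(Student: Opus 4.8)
The plan is to combine the two main constructions already developed in the paper: the infinite, locally finite graph $G$ from Theorem \ref{T:OneSpace}, which characterizes uniformly isomorphic embeddability of $\{X_n\}$, and the degree-reduction device $G \mapsto H$ introduced in Observation \ref{O:IntoInfDim} and the construction following it (short paths, long paths, the map $\psi$), which replaces a finite graph by a degree-$3$ graph without essentially changing its embeddability into infinite-dimensional Banach spaces. The difficulty is that the degree-reduction machinery was stated for \emph{finite} graphs $G$, whereas here $G$ is infinite; so the first task is to adapt that machinery to the locally finite setting, using an exhaustion of $G$ by the finite subgraphs induced on $V_n$.

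First I would recall that $G$ is the $(C,1,2)$-graph on $V=\bigcup_n V_n$, and that by property (1) of the proof of Theorem \ref{T:OneSpace} it is locally finite; moreover, since $V_n$ sits inside the slab $L\oplus_1[s_1,s_n]$ and $V\setminus V_{n-1}\subset C_n(s_{n-1},s_n]$, the graph $G$ has a natural filtration $G_1\subset G_2\subset\cdots$ by finite induced subgraphs $G_n$ on $V_n$, with $G=\bigcup_n G_n$, and with edges of $G$ never jumping more than a bounded number of slabs. For each $n$ I would apply the finite degree-reduction construction to $G_n$, producing a finite degree-$3$ graph $H_n$ together with bilipschitz embeddings $\psi_n\colon G_n\to H_n$ and the corresponding reverse relationship, where the parameter $M=M_n$ is chosen larger than $2e(G_n)$ and the short-path length equals $e(G_n)$, exactly as in Section \ref{S:Houston}. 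The key point is to perform these constructions \emph{compatibly}, so that $H_n$ is an induced subgraph of $H_{n+1}$ and $\psi_{n+1}$ restricts to $\psi_n$ on $G_n$; this requires fixing, once and for all, the choice of the distinguished long path $p$ used to define $\psi$, and adding, at stage $n+1$, only new short/long-path gadgets for the new edges and vertices of $G_{n+1}$. Then set $H=\bigcup_n H_n$; it is an infinite graph of maximum degree $3$, and it is locally finite because each $H_n$ is finite and the gadgets added at stage $n+1$ attach to $H_n$ only along the images of vertices of $G_{n+1}\setminus G_n$, which lie in the top slab.

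Next I would prove the equivalence. For the implication that uniformly isomorphic embeddings of $\{X_n\}$ into $X$ force $H$ to embed bilipschitzly: by the embeddings $\varphi_n$ of Section \ref{S:Houston} (the map into $W_n\oplus_1\mathbb{R}$ with $W_n\subset Z$ a suitable finite-dimensional overspace), each $H_n$ embeds into $X$ with distortion bounded by an absolute constant, uniformly in $n$ — here one uses that $X$ contains each $X_n$ uniformly, hence contains suitable finite-dimensional spaces into which the $MG_n$ embed with controlled distortion via Lemma \ref{L:TGintoOver}. Since $H$ is locally finite and is the increasing union of the $H_n$, the main result of \cite{Ost11c} (already invoked in the proof of Theorem \ref{T:OneSpace}) yields a single bilipschitz embedding of $H$ into $X$. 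Conversely, if $H$ embeds bilipschitzly into $X$, then so does each $H_n$ with the same distortion; composing with $\psi_n$ gives uniformly bilipschitz embeddings of the $G_n$, hence of the $(B_{X_m}(k),1)$-graphs that sit inside $G$ by property (3) of Theorem \ref{T:OneSpace}; Lemma \ref{L:ConseqDiscr} then delivers the uniformly isomorphic embeddings of $\{X_n\}$ into $X$.

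The main obstacle I expect is the \emph{compatibility} of the degree-reduction gadgets across the exhaustion — i.e., arranging $H_n\subset H_{n+1}$ and $\psi_n=\psi_{n+1}|_{G_n}$ while keeping all distortions absolutely bounded. Two subtleties arise: (a) the short-path length $e(G_n)$ and the long-path length $M_n$ grow with $n$, so a vertex $u$ of $G$ is represented in $H_n$ by a short path whose length increases as $n$ grows; the remedy is to let the short path $p_u$ be itself an increasing union of paths (appending new vertices, numbered $e(G_{n-1})+1,\dots,e(G_n)$, at stage $n$) and to keep the distinguished label $p=p_1$ fixed so that $\psi$ is well-defined on all of $V$ in the limit. (b) One must check that the reverse Lipschitz estimate $\lip(\psi^{-1})\le M_\cdot^{-1}$ and the forward estimate $\lip(\psi)\le 2e(G_n)+M_n$, together with the $\varphi$-side estimates, remain uniformly controlled — this is exactly where the condition $M_n>2e(G_n)$ is used, and it causes no trouble since distortions were already bounded by absolute constants in Section \ref{S:Houston} independently of the size of $G_n$. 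Once this bookkeeping is set up, the rest is a routine assembly of the pieces already in the paper, together with one appeal to \cite{Ost11c}.
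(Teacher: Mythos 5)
Your plan founders on the step you yourself flag as the main obstacle: the compatible exhaustion $H_n\subset H_{n+1}$ with $\psi_{n+1}|_{G_n}=\psi_n$ cannot be carried out with uniformly bounded distortion, because the graph $G$ of Theorem \ref{T:OneSpace} has unbounded degrees while all of its edges have length $1$. In the Section \ref{S:Houston} gadget the short path at a vertex $u$ must provide a separate attachment point for each edge incident to $u$, so in any nested construction the short-path lengths must grow without bound; the condition $M>2e(G)$ (long paths dominate short paths), on which both the estimate $\lip(\psi^{-1})\le M^{-1}$ and case (2) of the $\lip(\varphi^{-1})$ estimate rest, then cannot hold for the limit graph unless the long-path lengths grow as well. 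But if old long paths keep their length $M_k$ and new ones get length $M_{n}\gg M_k$ (which is forced if $H_n$ is to be an induced subgraph of $H_{n+1}$), then adjacent pairs of $G$ are sent by $\psi$ to pairs at distance $\asymp M_k$ for old edges and $\ge M_n$ for edges both of whose ends are new (every long path leaving such a short path has length $\ge M_n$), so the distortion of $\psi_n:G_n\to H_n$, and hence of your converse direction, blows up like $M_n/M_1$; if instead you rescale all long paths at each stage, you lose nestedness and there is no single infinite graph $H$. The claim that ``distortions were already bounded by absolute constants in Section \ref{S:Houston} independently of the size of $G_n$'' is true only when \emph{all} long paths of one graph have the same length $M>2e(G_n)$, which is exactly what compatibility destroys. (A secondary gap: your appeal to \cite{Ost11c} needs uniform embeddings of finite subsets of $H$ with the metric \emph{inherited from} $H$, so you would also have to prove $d_{H_n}\asymp d_H|_{H_n}$ uniformly, which is not automatic once gadgets of very different scales are glued together.)

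The paper resolves precisely this scale/degree mismatch by \emph{not} reusing the graph of Theorem \ref{T:OneSpace}. It builds a new weighted graph $W$ from nets $A_n,B_n$ that are $2^n$-separated and $2^n$-dense at level $n$, with edge weights $2^n$, and lets the dimension grow slowly via the sequence $i(n)$, so that the degree at level $n$ is at most $25^{d(n)}$ by a volumetric argument. Short paths at level $n$ then have length $2\cdot 25^{d(n+1)}$, attachment positions are chosen by a proper edge coloring (a global labelling of positions by edges, as in Section \ref{S:Houston}, is impossible since $e(W)=\infty$), and the joint choice of $M$ and $i(n)$ via $M\cdot 2^n>4\cdot 25^{d(n+1)}$ keeps long paths locally longer than short paths, so the reverse estimates survive. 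Finally, embedding the resulting infinite $H$ back into a space built from $L$ cannot be done by the finite-dimensional Auerbach-basis device of Lemma \ref{L:TGintoOver}; the paper replaces it by an Ovsepian--Pe\l czy\'nski bounded biorthogonal system in $L$ with a careful inductive choice of the vectors $x_{uv}$, and only then invokes \cite{Ost11c}. These ingredients (weighted growing scales, slowly growing dimensions, edge coloring, and the biorthogonal-system argument) are the substance of the proof, and none of them is supplied by the proposal.
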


\begin{proof} Our proof uses some of the ideas of the proof of Theorem
\ref{T:OneSpace}. For this reason we keep the same notation for
some of the objects, although now they are somewhat different. We
use Definition \ref{D:MetrNotions} and the notation introduced in
formula \eqref{E:NotatInter}.
\medskip

We may assume without loss of generality that $\dim X_n=n$. We
introduce convex sets $C_n$ and $C$ in $L\oplus\mathbb{R}$ of the
form
\[C_n:=\conv\left(\bigcup_{k=1}^n (B_{X_{i(k)}}(4^k), s_k)\right)~\hbox{ and }~C=\bigcup_{n=1}^\infty C_n,\]
where $\{i(k)\}_{k=1}^\infty$ is a sequence of natural numbers
satisfying $i(1)=1$, $i(k)\le i(k+1)\le i(k)+1$, and such that the
equality $i(k+1)=i(k)+1$ holds {\bf rarely} (the exact condition
will be described later), so the dimension of the sets $C_n$
increases slowly.
\medskip

Let $\{s_n\}_{n=1}^\infty$ be a sequence of real numbers, such
that $s_1=0$ and the following two conditions are satisfied:
\medskip

{\bf Gap condition:}
\begin{equation}\label{E:3times2^nGap}
s_n-s_{n-1}>6^{n+1}.
\end{equation}

{\bf Density condition:}
\begin{equation}\label{E:sSUBn} C_{n-1}[a,s_{n-1}]~\hbox{ is }\left(\frac12\right)^{n-2}-\hbox{dense in }~C_n[a,s_{n-1}]
~\hbox{ for every }~a\in[s_1,s_{n-1}].
\end{equation}

\remove{{\bf Narrowness condition:} For each $(x,a)\in C$ there
exists a number $S<\infty$ such that for each $(y,b)\in C$ with
$b\ge S$ we have
\begin{equation}\label{E:Narrow}
||(x,a)-(y,b)||\le 2|a-b|
\end{equation}
}

\begin{remark} It is easy to verify that condition \eqref{E:sSUBn} \remove{and \eqref{E:Narrow}
are} is satisfied for each sufficiently rapidly increasing
sequence $\{s_n\}_{n=1}^\infty$.
\end{remark}

We construct two sequences of finite subsets in
$L\oplus\mathbb{R}$, $\{A_n\}_{n=1}^\infty$ and
$\{B_n\}_{n=2}^\infty$. The desired properties of these sequences
of sets are the following:\medskip

\begin{enumerate}

\item\label{I:A_nINC_n} $A_n$ is  $2^n$-separated $2^n$-dense
subset in $C_n[s_{n-1}+2^n,s_n]$. If $n=1$, this condition is
replaced by: $A_1$ is a $2$-separated $2$-dense set in $C_1$.
\medskip

\item\label{I:A_nINballs} $A_n$ contains $2^n$-separated
$2^n$-dense subsets in $\{(B_{X_{i(k)}}(4^n),s_n)\}_{k=1}^n$.
\medskip

\item\label{I:B_nETC} $B_n$, $n\ge 2$ is a $2^n$-separated subset
of $C_n(s_{n-1},s_{n-1}+2^n)$ such $d(A_n,B_n)\ge 2^n$,
$d(A_{n-1},B_n)\ge 2^n$ and $A_{n-1}\cup B_n\cup A_n$ is a
$2^{n-1}$-separated $2^n$-dense subset in
$C_n[s_{n-2}+2^{n-1},s_n]$. If $n=2$, the last condition is
replaced by: $2$-separated and $2^2$-dense in $C_2[s_1,s_2]=C_2$.
\medskip
\end{enumerate}

We construct such sets in steps. First we construct $A_n$, then
$B_n$ (for $n\ge 2$). We start by letting $A_1$ to be any
$2$-separated $2$-dense subset of $(B_{X_1}(4),s_1)$.\medskip

The construction of $A_n$ $(n\ge 2)$ starts with picking a
$2^n$-separated $2^n$-dense subset of \[(B_{X_1}(4^n),s_n).\] Then
we gradually extend this subset to $2^n$-separated $2^n$-dense
subsets of
\begin{equation}\label{E:BallsN}(B_{X_{i(2)}}(4^n),s_n), \dots
(B_{X_{i(n)}}(4^n),s_n).\end{equation} Observe that our
description of the sequence $\{i(n)\}$ implies that many sets in
the sequence \eqref{E:BallsN} are the same.
\medskip

We complete the construction of $A_n$ extending the obtained set
to a $2^n$-separated $2^n$-dense subset of
$C_n[s_{n-1}+2^n,s_n]$.\medskip

To construct $B_n$ we remove from $C_n(s_{n-1}, s_{n-1}+2^n)$ all
elements which are covered by $2^n$-balls centered in $A_{n-1}\cup
A_n$. If the obtained set $R$ is empty, we let $B_n=\emptyset$.
Otherwise we let $B_n$ be a $2^n$-separated $2^n$-dense subset of
$R$.\medskip

The only condition which has to be verified is the condition that
$A_{n-1}\cup B_n\cup A_n$ is $2^n$-dense in
$C_n[s_{n-2}+2^{n-1},s_n]$ (and its version for $n=2$). Here we
use the condition \eqref{E:sSUBn}. By this condition, since
$A_{n-1}$ is $2^{n-1}$-dense in
$C_{n-1}[s_{n-2}+2^{n-1},s_{n-1}]$, it is
$\left(2^{n-1}+\left(\frac12\right)^{n-2}\right)$-dense in
$C_{n}[s_{n-2}+2^{n-1},s_{n-1}]$. Since
$\left(2^{n-1}+\left(\frac12\right)^{n-2}\right)<2^n$, the
conclusion follows from the construction of $A_n$ and $B_n$.
\medskip

Let \[V=\left(\bigcup_{n=1}^\infty
A_n\right)\cup\left(\bigcup_{n=2}^\infty B_n\right).\]

We create a weighted graph with the vertex set $V$ by joining a
vertex $v\in (A_n\cup B_n)$, $n\ge 2$, to all vertices of
\[\left(\bigcup_{k=1}^n A_{k}\right)\cup\left(\bigcup_{k=2}^n B_{k}\right)\] which are
within distance  $3\cdot 2^n$ to $v$ in the normed space
$L\oplus_1\mathbb{R}$. (Also we join each vertex  $v\in A_1$ to
all vertices of $A_1$ which are within distance $6$ to $v$.) The
inequality \eqref{E:3times2^nGap} implies that in this way
vertices of $A_n\cup B_n$ are joined only to some of the vertices
in $A_{n-1}\cup B_{n-1}$, $A_n\cup B_n$, and $A_{n+1}\cup
B_{n+1}$. The vertex $v$ is joined to some vertices in
$A_{n+1}\cup B_{n+1}$ if $v$ is within distance $3\cdot 2^{n+1}$
in $L\oplus_1\mathbb{R}$ to those vertices.
\medskip

We assign weight (length) $2^n$ to all edges joining $v\in
(A_n\cup B_n)$ with vertices of $(A_{n-1}\cup B_{n-1})\bigcup
(A_n\cup B_n)$. Thus we assign weight (length) $2^{n+1}$ to the
edges joining $v\in (A_n\cup B_n)$ with vertices of $(A_{n+1}\cup
B_{n+1})$. It is clear that we get a well-defined weighted graph.
We denote the obtained weighted graph by $W$ and endow it with its
(weighted) shortest path metric.
\medskip

We estimate the number of edges incident to $v\in (A_n\cup B_n)$
in the following way. All vertices joined to $v$ by edges are in a
ball of radius $3\cdot 2^{n+1}$ centered at $v$. The distance
between two vertices joined to $v$ is at least $2^{n-1}$ because
all such vertices are in the set $A_{n-1}\cup B_{n-1}\cup A_n\cup
B_n\cup A_{n+1}\cup B_{n+1}$, and it is clear from the conditions
on $A_n$ and $B_n$ that the set $A_{n-1}\cup B_{n-1}\cup A_n\cup
B_n\cup A_{n+1}\cup B_{n+1}$ is $2^{n-1}$-separated. All of the
elements of this set are in $X_{i(n+1)}\oplus\mathbb{R}$, and the
dimension of this space is $d(n)=i(n+1)+1$. Therefore
$X_{i(n+1)}\oplus\mathbb{R}$-balls of radiuses $2^{n-2}$ centered
at points joined to $v$ with an edge have disjoint interiors and
are contained in a ball of radius $3\cdot 2^{n+1}+2^{n-2}$
centered at $v$. Comparing the volumes of the union of the
$2^{n-2}$-balls and the $(3\cdot 2^{n+1}+2^{n-2})$-ball containing
them (it is the standard volumetric argument, see e.g. \cite[Lemma
2.6]{MS86}), we get that the number of vertices adjacent to $v$ is
at most
\[\left(\frac{3\cdot 2^{n+1}+2^{n-2}}{2^{n-2}}\right)^{d(n)}=25^{d(n)}.\]

\begin{lemma}\label{L:distortion3} The natural embedding of $W$ into the
normed space $L\oplus_1\mathbb{R}$ has distortion $\le 3$. More
precisely, its Lipschitz constant is $\le 3$, and the Lipschitz
constant of the inverse map is $\le 1$. \end{lemma}

\begin{proof} The statement about the Lipschitz constant of the natural embedding is immediate.
In fact, ends of an edge of length $2^n$ in $W$ correspond to a
vector in $L\oplus_1\mathbb{R}$ whose length is $\le 3\cdot
2^n$.\medskip

The fact that the Lipschitz constant of the inverse map is $\le 1$
can be proved as follows:
\medskip

Let $a$ and $b$ be vertices of $W$. We may assume that $b\in
(A_n\cup B_n)$ and $a\in (A_k\cup B_k)$ for some $k\le n$. We use
double induction. This means the following: First we prove the
result for $n=1$ using induction on $\lfloor ||a-b||/2\rfloor$
(here we use Lemma \ref{L:CAlphaBeta}).
\medskip

Next, we assume that the result holds for $n=m$ and any $k\le m$.
We show that this assumption can be used to prove the result for
$n=m+1$ using the induction on $\lfloor ||a-b||/(2^{m+1})\rfloor$.
\medskip

So let us follow the described program. If we divide all distances
by $2$, the desired inequality for $a,b\in A_1$ is a special case
of Lemma \ref{L:CAlphaBeta} for $\alpha=\beta=1$.
\medskip

\noindent{\bf Assumption:} Now we assume that we have proved the
statement for all pairs $b\in (A_m\cup B_m)$, $a\in (A_k\cup
B_k)$, $k\le m$.\medskip

We show that this assumption can be used to prove the statement
for $b\in (A_{m+1}\cup B_{m+1})$, $a\in (A_k\cup B_k)$, $k\le
m+1$, using induction on $\lfloor ||a-b||/(2^{m+1})\rfloor$.
\medskip

If $\lfloor ||a-b||/(2^{m+1})\rfloor\le 2$, then $a$ and $b$ are
joined by an edge of length $2^{m+1}$. In addition, $||a-b||\ge
2^{m+1}$ (see the conditions \ref{I:A_nINC_n} and \ref{I:B_nETC}
in the description of $A_n$ and $B_n$). So in this case we get the
desired $d_W(a,b)\le ||a-b||$.\medskip

\noindent{\bf Induction Hypothesis:} Suppose that the statement
has been proved for all pairs $b\in (A_{m+1}\cup B_{m+1})$, $a\in
(A_k\cup B_k)$, $k\le m+1$, and
\begin{equation}\label{E:CondIH}\lfloor ||a-b||/(2^{m+1})\rfloor\le D.\end{equation} We show that this
implies the same conclusion for pairs $b\in (A_{m+1}\cup
B_{m+1})$, $a\in (A_k\cup B_k)$, $k\le m+1$, satisfying $\lfloor
||a-b||/(2^{m+1})\rfloor\le D+1$.
\medskip

We need to consider the case where the inequality $(D+2)\cdot
2^{m+1}> ||a-b||\ge (D+1)\cdot 2^{m+1}$ is satisfied, $D\ge 2$,
$D\in\mathbb{N}$. In such a case let $\widetilde b$ be the vector
on the line segment joining $b$ to $a$ at distance $2\cdot
2^{m+1}$ from $b$.
\medskip

It is clear from the construction that $\widetilde b\in C_{m+1}$.
Therefore (see condition \ref{I:B_nETC} in the list of conditions
on $A_n$ and $B_n$) there is a point $\widehat b\in V$ such that
$||\widehat b-\widetilde b||\le 2^{m+1}$.
We have $||\widehat b-b||\le 3\cdot 2^{m+1}$  and
\begin{equation}\label{E:Step}||\widehat b -a||\le
||b-a||-2^{m+1}.\end{equation} Also we have $\widehat b\in A_m\cup
B_m\cup A_{m+1}\cup B_{m+1}$. Therefore $d_W(\widehat b,b)\le
2^{m+1}$.
\medskip

In the case when $\widehat b\in A_{m+1}\cup B_{m+1}$, we get the
desired conclusion using the Induction Hypothesis as follows: The
inequality \eqref{E:Step} implies that the pair $a,\widehat b$
satisfies the inequality \eqref{E:CondIH}. By the Induction
Hypothesis,
\[||a-\widehat b||\ge d_W(a,\widehat b).\]
Therefore
\[||a-b||\ge 2^{m+1}+||a-\widehat b||\ge  2^{m+1}+d_W(a,\widehat
b)\ge d_W(a,b).\]

It remains to consider the case $\widehat b\in A_{m}\cup B_{m}$.
This case is to be divided into two subcases: $k=m+1$ and $k\le
m$. In the latter case we use the assumption that we have proved
the statement for points in
$\left(\bigcup_{i=1}^mA_i\right)\cup\left(\bigcup_{i=2}^mB_i\right)$.
In the former case we use the Induction Hypothesis.
\end{proof}

The graph $W$ contains $(B_{X_k}(2^n),1)$-graphs (in the sense of
Definition \ref{D:CAlphaBeta}) as subgraphs for all
$k,n\in\mathbb{N}$, with all edges having the same weight of
$2^n$. This follows from the choice of $4^n$ as the diameter of
the ball used in the construction of $C_n$ and the fact that $A_n$
contains subsets which are $2^n$-separated and $2^n$-dense in the
ball $(B(X_{k}(4^n)),s_n)$ for $k\le i(n)$ (by combining these
facts with our definitions). We claim that these
$(B(X_{k}(2^n)),1)$-graphs embed in a uniformly bilipschitz way
into $W$. This claim can be proved in the following way: Applying
Lemma \ref{L:CAlphaBeta} we get that the natural embeddings of the
$(B(X_{k}(4^n)),2^n)$-graphs (constructed using the
$2^n$-separated and $2^n$-dense in the ball $(B(X_{k}(4^n)),s_n)$)
into $L\oplus_1\mathbb{R}$ are uniformly bilipschitz. By Lemma
\ref{L:distortion3}, the natural embedding of $W$ into
$L\oplus_1\mathbb{R}$ is also bilipschitz. The conclusion on
uniformity of bilipschitz embeddings of the
$(B(X_{k}(2^n)),1)$-graphs into $W$ follows. Thus, by Lemma
\ref{L:GraphsToSubspaces}, bilipschitz embeddability of the graph
$W$ into a Banach space $X$ implies that $X$ admits uniformly
isomorphic embeddings of the spaces $\{X_i\}_{i=1}^\infty$.
\medskip

Now we construct an unweighted graph $H$ with maximum degree $3$
whose existence is claimed in Theorem \ref{T:OneDeg3}. The graph
$H$ will admit a bilipschitz embedding into
$(L\oplus_1\mathbb{R})\oplus_1 \mathbb{R}$. The graph $H$ will be
a modification of the weighted graph $W$. As in the construction
of Section \ref{S:Houston}, the graph $H$ consists of long paths
and short paths. Short paths correspond to vertices of $W$, long
paths correspond to (weighted) edges of $W$. The length of a short
path corresponding to a vertex in $A_n\cup B_n$ is $2\cdot
25^{d(n+1)}$. These lengths of short paths are chosen because they
provide a sufficient number of colors for the coloring introduced
in the next paragraph.
\medskip

We are going to color edges of $W$. For our purposes we need a
proper edge coloring (that is, edges having a common end should
have different colors). Of course, since the degrees of $W$ are
unbounded, we need infinitely many colors. Our purpose is to bound
from above the number of colors used for edges incident with a
vertex $v\in A_n\cup B_n$ by $2\cdot 25^{d(n+1)}$. To achieve this
goal we order vertices in $V$ according to the magnitude of their
$\mathbb{R}$-coordinate in the decomposition $L\oplus\mathbb{R}$
(starting with those for which the $\mathbb{R}$-coordinate is
$0$), resolving ties in an arbitrary way.
\medskip

We color edges incident with the first vertex arbitrarily (there
are at most  $25^{d(1)}$ of such edges). For each of the further
vertices in our list we need to color all uncolored edges incident
with them. We do this according to the following procedure. Let
$v\in A_n\cup B_n$ be the next vertex in our list. We pick an
uncolored edge incident with $v$, let $u$ be the other end of this
edge. We cannot use for the edge $vu$ the colors which have
already been used for other edges incident to $v$ and to $u$.
There are at most $25^{d(n)}-1$ colors already used for edges
incident with $v$. As for $u$, we know that (see the construction
of $W$) $u\in A_{n-1}\cup B_{n-1}\cup A_n\cup B_n\cup A_{n+1}\cup
B_{n+1}$, therefore the degree of $u$ is $\le 25^{d(n+1)}$.
Therefore among $2\cdot 25^{d(n+1)}$ colors there should be an
available color for the edge $vu$.
\medskip

Now we create a graph as in Section \ref{S:Houston}. The only
difference is that we paste a long path to the level corresponding
to its color (rather than to the level corresponding to the path
itself as we did in Section \ref{S:Houston}). The length of the
long path corresponding to an edge of $W$ of  weight $2^n$ is
$M\cdot 2^n$, where $M$ is a positive integer which we are going
to select now, together with the sequence $\{i(n)\}_{n=1}^\infty$
(which, as we have already mentioned, is a slowly increasing
sequence). The main condition describing our choice of both
objects is
\begin{equation}\label{E:Mand_i(n)}
M\cdot 2^n>4\cdot 25^{d(n+1)}.
\end{equation}
(Recall that $d(n)=i(n+1)+1$.) This condition ensures that the
length of a long path is larger than the sum of the lengths of the
short paths at its ends.
\medskip

We get an unweighted graph, let us denote it $H$. The maximum
degree of $H$ is $3$ because we use a proper edge coloring. The
graph $W$ admits a bilipschitz embedding into $H$: consider the
map which maps each vertex of $W$ to the vertex of level $1$ on
the corresponding short path. The Lipschitz constant of this
embedding is $\le 2\cdot M$ by \eqref{E:Mand_i(n)}. The Lipschitz
constant of the inverse map is $\le M^{-1}$. In fact, if we
consider two vertices in the image of $W$, and join them by a
shortest path in $H$, the path goes through some collection of
short paths (possibly at the end vertices only). The vertices in
$W$ corresponding to these short paths form a path in $W$. Each
time the weight of the edge in $W$ is $M^{-1}\times($the length of
the corresponding path in $H)$. The conclusion about the Lipschitz
constant of the inverse map follows.\medskip

Observe that the graph $H$ is locally finite because its maximum
degree is $3$. It remains to show that the graph $H$ admits a
bilipschitz embedding into any Banach space $X$ containing
uniformly isomorphic $\{X_i\}$. We do this by proving the fact
that the graph $H$ admits a bilipschitz embedding into $(L\oplus_1
\mathbb{R})\oplus_1\mathbb{R}$. By the finite determination result
of \cite{Ost11c} this is enough because it is easy to see that
finite dimensional subspaces of the space $(L\oplus_1
\mathbb{R})\oplus_1\mathbb{R}$ are uniformly isomorphic to
subspaces in $\{X_i\}$.

\begin{lemma}\label{L:HintoL+R+R} The graph $H$ admits a bilipschitz embedding into $(L\oplus_1
\mathbb{R})\oplus_1\mathbb{R}$.
\end{lemma}

\begin{proof} Vertices of $W$ are in $L\oplus_1
\mathbb{R}$. We map the short path corresponding to a vertex $v\in
A_n\cup B_n$ onto those points of the line segment joining
$(Mv,1)$ with $(Mv,2\cdot 25^{d(n+1)})$ whose second coordinate is
an integer. (The number $M$ is introduced by \eqref{E:Mand_i(n)}.
In a pair $(Mv,a)$ the first component is in $L\oplus_1\mathbb{R}$
and the second component is in the second
$\mathbb{R}$-summand.)\medskip

Now we describe our map for long paths of $H$. For each long path
of $H$, or, what is the same, for each edge $uv$ of $W$ we pick a
vector $x_{uv}\in L$ (the space $L$ is identified with the
corresponding summand in $(L\oplus_1
\mathbb{R})\oplus_1\mathbb{R}$). Suppose that the edge $uv$ has
weight $2^n$ and color $i$ in the coloring above. We number
vertices of the long path corresponding to $uv$ as
$u_0,u_1,\dots,u_N$, where $N=M\cdot 2^n$, $u$ corresponds to
$u_0$ and $v$ corresponds to $u_N$. The image of $u_m$
$(m=0,1,\dots,N)$ under the map which we are constructing is given
by \begin{equation}\label{E:MapLongEdge}Tu_m=\begin{cases}
\left(\left(1-\frac mN\right)Mu+\frac mN\,Mv+m
x_{uv}, i\right) & \hbox{ if }m\le\frac N2\\
\left(\left(1-\frac mN\right)Mu+\frac mN\,Mv+\left(N-m\right)
x_{uv}, i\right) & \hbox{ if }m\ge\frac N2.
\end{cases}\end{equation}
So we map vertices of the long path in $H$ corresponding to an
edge $uv$ of $W$ onto a sequence of evenly distributed points in
the union of two line segments joining $(Mu,i)$ and $(Mv,i)$. We
introduce also a map $O$ given by $Ou_m=\left(1-\frac
mN\right)Mu+\frac mN\,Mv$.
\medskip

The map $T$ introduced by \eqref{E:MapLongEdge} is a Lipschitz map
of the vertex set of $H$ into $(L\oplus_1
\mathbb{R})\oplus_1\mathbb{R}$ for an arbitrary uniformly bounded
set of vectors $\{x_{uv}\}$ in $L$. To show this it suffices to
find a bound for the distances between images of ends of an edge
of $H$. For short-path-edges the distances are equal to $1$
because their ends are mapped onto pairs of the form $(Mv,i)$,
$(Mv,i+1)$. For  a long-path-edge, the distance between the ends
is
\[\left\|\frac1N\,Mv-\frac1N\,Mu\pm x_{uv}\right\|\]
This norm can be estimated from above by $3+\sup_{uv}||x_{uv}||$
(we use the estimate for the Lipschitz constant of Lemma
\ref{L:distortion3}).\medskip

Therefore the purpose of a suitable selection of the set
$\{x_{uv}\}$ is to ensure that $T^{-1}$ is Lipschitz. In a similar
situation in Section \ref{S:Houston} we used Auerbach bases. For
this construction we use a somewhat different type of biorthogonal
sequences. We use systems whose existence was shown by
Ovsepian-Pe\l czy\'nski \cite{OP75}. We mean the following result
proved in \cite{OP75} (see also \cite[p.~44]{LT77}):

\begin{theorem}\label{T:OP75} There is an absolute constant $C>0$ such that for each separable Banach
space $Z$, each sequence $\{f_i^*\}\subset Z^*$, and each sequence
$\{f_i\}\subset Z$ there exists a biorthogonal sequence
$\{z_i,z_i^*\}_{i=1}^\infty$ in $Z$ such that $||z_i||\le C$,
$||z_i^*||\le C$, the linear span of $\{z_i\}$ contains the
sequence $\{f_i\}$, and the linear span of $\{z_i^*\}$ contains
the sequence $\{f_i^*\}$.
\end{theorem}

\begin{remark} Pe\l czy\'nski \cite{Pel76}  and Plichko \cite{Pli76} proved that the constant
$C$ can be chosen to be an arbitrary number $>1$.
\end{remark}\medskip

We apply Theorem \ref{T:OP75} to $Z=L$ in the following situation.
We form the sequence $\{f_i^*\}$ in the following way. We denote
by $P_L:L\oplus\mathbb{R}\to L$ the natural projection. For any
two edges $uv$ and $wz$ in $W$ we consider all vectors of the form
\begin{equation}\label{E:P_Ldiff}
P_L(Ou_m-Ow_p)\end{equation} for all admissible values of $m$ and
$p$. The map $O$ was defined after formula \eqref{E:MapLongEdge}
and $Ow_p$ is the image of a vertex $w_p$ of a long path
corresponding to $wz$.
\medskip

For each vector of the form \eqref{E:P_Ldiff} we pick a supporting
functional $f^*\in L^*$, that is, a functional $f^*$ satisfying
\[f^*(P_L(Ou_m-Ow_p))=||P_L(Ou_m-Ow_p)||\]
and $||f^*||=1$. There are countably many such functionals, so we
can form a sequence $\{f^*_i\}$ containing all of them. Also we
form a sequence $\{f_i\}$ containing all of the vectors of the
form \eqref{E:P_Ldiff}.\medskip

\remove{\begin{definition}\label{D:FarAway} Let $uv$ and $wz$ be
two edges in $W$, so the vertices $u,v,w,z$ are vectors in
$L\oplus\mathbb{R}$. We say that the edges $uv$ and $wz$ are {\it
far away} from each other if for any difference of the form
$O(u_m)-O(w_p)$ (as defined above) we have
\[||P_L(O(u_m)-O(w_p))||\le\frac12||O(u_m)-O(w_p)||.
\]
\end{definition}

\begin{remark} Combining the Narrowness Condition \eqref{E:Narrow} and the
definition of $W$ we get that for each edge $uv$ in $W$ there are
only finitely many edges in $W$ which are not far away from $uv$.
\end{remark}
}

Now we describe a suitable choice of the vectors $x_{uv}$ for
\eqref{E:MapLongEdge}. We enumerate edges of $W$ in the
non-decreasing order of the larger $\mathbb{R}$-coordinates of
their ends, resolving ties arbitrarily. Let $uv$ be the first edge
in the ordering. We pick as $x_{uv}$ an element of the sequence
$\{z_i\}$ satisfying the following two conditions:

\begin{itemize}

\item $z_i$ is annihilated by all functionals $f_j^*$ of the
sequence $\{f_i^*\}$ supporting vectors $P_L(Ou_m-Ow_p)$, where
$u_m$ is in the long path corresponding to $uv$ and $w_p$ is in
the long path corresponding to an edge $wz$ for which the smaller
$\mathbb{R}$-coordinate of its ends is $\le s_3$.

\item $x^*_{uv}:=z_i^*$ annihilates all vectors $f_j$ of the
sequence $\{f_i\}$ of the form $P_L(Ou_m-Ow_p)$, where $u_m$ is in
the long path corresponding to $uv$ and $w_p$ is in the long path
corresponding to an edge $wz$ for which the smaller
$\mathbb{R}$-coordinate of its ends is $\le s_3$.

\end{itemize}

Such pair $z_i,z_i^*$ exists because there are finitely many $f_j$
and $f_j^*$ satisfying the conditions. (Here we use the following
conditions of Theorem \ref{T:OP75}: the linear span of $\{z_i\}$
contains the sequence $\{f_i\}$, and the linear span of
$\{z_i^*\}$ contains the sequence $\{f_i^*\}$.)\medskip

We make a similar choice of $x_{uv}$ for all further edges in the
ordering. More details: If we consider an edge for which the
larger $\mathbb{R}$-coordinate of an end is in the interval
$(s_{n-1}, s_n]$, we replace $s_3$ by $s_{n+2}$ in the conditions
above. Also we pick different pairs $z_i,z_i^*$ for different
edges $uv$.\medskip

With this choice of vectors $x_{uv}$, let us estimate from above
the Lipschitz constant of the inverse of $T$. Let $x,y$ be two
vertices of $H$. Let $x$ be on a long path joining $u$ and $v$ and
$y$ be on a long path joining $w$ and $z$ (this is a generic
description because $x$ and $y$ are allowed to be the end vertices
of the long paths). We need to estimate from above the quotient
\begin{equation}\label{E:quotient}\frac{d_H(x,y)}{||Tx-Ty||}.\end{equation}

We consider a shortest $xy$-path in $H$. It has naturally defined
{\it short-path portion} and {\it long-path portion}. There are
two cases: (1) The length of the short-path portion of this path
has length $\ge\frac12d_H(x,y)$; The length of the long-path
portion of of this path has length $>\frac12d_H(x,y)$.
\medskip

The construction of the graph $H$ (see inequality
\eqref{E:Mand_i(n)}) is such that in the case (1) the short-path
portion consists of just one piece. Let the short path portion
start at level (color) $i$ and end at level $j$. Then $|i-j|\ge
\frac12d_H(x,y)$. On the other hand, since the sum $(L\oplus_1
\mathbb{R})\oplus_1\mathbb{R}$ is direct, we have
$||Tx-Ty||\ge|i-j|\ge\frac12d_H(x,y)$.\medskip

In the case (2) we ignore the difference in the second
$\mathbb{R}$-coordinate (caused by the different colors of the
edges $uv$ and $wz$). There are two subcases: Subcase (A): The
vertices $x$ and $y$ are on the same long path; Subcase (B): The
vertices $x$ and $y$ are on different long paths.
\medskip

\noindent{\bf Subcase (A):} Observe that our construction is such
that there is a functional  supporting $P_L(u-v)$ (let us denote
it $f^*_{uv}$) which annihilates by $x_{uv}$ (because $u-v$ is of
the form $Ou_m-Ow_p$). Let $x=u_p$, $y=u_s$, we have
$d_H(x,y)=|s-p|$ and
\[||Tu_p-Tu_s||\ge |f^*_{uv}P_L(Ou_p-Ou_s)|=\frac{M|s-p|}{N}||P_L(u-v)||.
\]

If $||P_L(u-v)||=||P_Lu-P_Lv||$ is a nontrivially large part of
$||u-v||\ge\frac{N}{M}$, we get the desired estimate.\medskip

If $||P_Lu-P_Lv||$ is only a small part of $||u-v||$, then the
difference between the $\mathbb{R}$-coordinates of $u$ and $v$ is
the large part of $||u-v||$. Denoting the projection of
$L\oplus\mathbb{R}$ to $\mathbb{R}$ by $P_\mathbb{R}$, we use
$P_\mathbb{R}(x_{uv})=0$ and get
\[||Tu_p-Tu_s||\ge\frac{M|s-p|}{N}
|P_\mathbb{R}u-P_\mathbb{R}v|,\] so we get the estimate in this
case, too.
\medskip

\noindent{\bf Subcase (B):} Let $x$ be on a long path
corresponding to an edge $uv$ in $W$, and $y$ be on a long path
corresponding to $wz$. Then there are two possibilities:
\medskip

(i) One of the edges $uv$ and $wz$ has the largest
$\mathbb{R}$-coordinate of its ends in the interval $(s_{n-1},
s_n]$, and the other edge has the least $\mathbb{R}$-coordinate of
its ends in the interval $[s_{m},s_{m+1})$, where $m\ge n+2$
\medskip

(ii) It is not the case.
\medskip

\noindent{\bf Subsubcase (i):} Let $x=u_m$ and $y=w_p$. Ignoring
the second $\mathbb{R}$-coordinate (in the sum
$L\oplus\mathbb{R}\oplus\mathbb{R}$) and using
$P_\mathbb{R}x_{uv}=P_\mathbb{R}x_{wz}=0$ we get
\begin{equation}\label{E:below}||Tx-Ty||\ge M|P_\mathbb{R}u-P_\mathbb{R}w|\ge
M(s_{m}-s_n),\end{equation} where $u$ and $w$ are the
corresponding vertices picked in such a way that $u$ has larger
$\mathbb{R}$-coordinate than $v$ and $w$ has smaller
$\mathbb{R}$-coordinate than $w$. On the other hand

\[\begin{split}
d_H(x,y)&\le d_H(u,w)+M2^n+M2^{m+1}\\&\le
M||u-w||+M2^n+M2^{m+1}\\&\le
M|P_\mathbb{R}u-P_\mathbb{R}w|+M2^n+M2^{m+1}+M4^n+M4^{m+1}\\
&\le M|P_\mathbb{R}u-P_\mathbb{R}w|+M(s_{m}-s_n)\\
&\le 2 M|P_\mathbb{R}u-P_\mathbb{R}w|.
\end{split}\]
To get these inequalities we use
\begin{itemize}

\item The triangle inequality in $H$ for the first inequality.

\item Lemma \ref{L:distortion3} for the second inequality.

\item The observation that $||P_Lz||\le 4^n$ if $z\in C_n$ for the
third inequality (see the definition of $C_n$).

\item The gap condition \eqref{E:3times2^nGap} for the fourth
inequality.

\item The second inequality in \eqref{E:below} for the fifth
inequality.

\end{itemize}

The conclusion follows.\medskip

\noindent{\bf Subsubcase (ii):} We may assume without loss of
generality that $x$ is closer (in $H$) to the short path
corresponding to $u$ than to the short path corresponding to $v$
and that $y$ is closer to the short path corresponding to $w$
rather than to the short path corresponding to $z$. We have
\begin{equation}\label{E:FirstStep}\begin{split}||Tx&-Ty||\\
&\ge \left\| \left(1-\frac mN\right)Mu+\frac mN\,Mv+m x_{uv}-
\left(1-\frac {p}{N'}\right)Mw-\frac {p}{N'}\,Mz-p x_{wz}
\right\|,\end{split}
\end{equation}
where $N'$ is the length of the long path corresponding to $wz$.
Let us denote the vector whose norm is taken in the right-hand
side of \eqref{E:FirstStep} by $B$. We get
\[||Tx-Ty||\ge |x_{uv}^*(B)|=m.\]
\[||Tx-Ty||\ge |x_{wz}^*(B)|=p.\]
Writing $x_{uv}^*(B),x_{wz}^*(B)$ we mean that the functionals
$x_{uv}^*,x_{wz}^*\in L^*$ act on a vector $t\in
L\oplus\mathbb{R}$ by acting on $P_Lt$. We use the fact that the
difference $\left(1-\frac mN\right)Mu+\frac mN\,Mv- \left(1-\frac
{p}{N'}\right)Mw-\frac {p}{N'}\,Mz$ is of the from $Ou_m-Ow_p$ for
edges satisfying the conditions above. Thus this difference is
annihilated by $x_{uv}^*$ and $x_{wz}^*$.\medskip

We apply the triangle inequality to \eqref{E:FirstStep} and get
\[\begin{split}||Tx&-Ty||\\
&\ge M||u-w||-\frac{mM}N||u-v||-\frac{pM}{N'}||w-z||-mC-pC\\
&=M||u-w||-m\left(\frac{M}N||u-v||+C\right)-p\left(\frac{M}{N'}||w-z||+C\right)\\
&\ge
d_H(u,w)-m\left(\frac{M}N||u-v||+C\right)-p\left(\frac{M}{N'}||w-z||+C\right),\end{split}
\]
where $C=\sup_i||z_i||=\sup_{u,v}||x_{uv}||$. (We used Lemma
\ref{L:distortion3} to get the last inequality.) Observe that the
numbers in brackets in the last line are bounded by an absolute
constant, let us denote it by $D$. We also have $d_H(x,y)\le
d_H(u,w)+m+p$. Therefore
\[\frac{d_H(x,y)}{||Tx-Ty||}\le\min\left\{\frac{d_H(u,w)+m+p}{d_H(u,w)-D(m+p)},
\frac{d_H(u,w)+m+p}{m}, \frac{d_H(u,w)+m+p}{p}\right\}.\] It is
easy to see that the minimum in the last formula is bounded from
above by an absolute constant.

\end{proof}
\end{proof}

\end{large}

\begin{small}

\renewcommand{\refname}{\section{References}}

\end{small}
\medskip

\noindent{\sc Department of Mathematics and Computer Science\\
St. John's University\\ 8000 Utopia Parkway, Queens, NY 11439,
USA}\\
e-mail: {\tt ostrovsm@stjohns.edu}


\begin{thebibliography}{GNRS04}


\bibitem[Aue30]{Aue30} H.~Auerbach, {\it On the area of convex curves with conjugate
diameters} (Polish), Ph.~D. thesis, University of Lw\'ow, 1930.

\bibitem[Bau07]{Bau07} F.~Baudier, Metrical characterization of
super-reflexivity and linear type of Banach spaces, {\it Archiv
Math.},  {\bf  89}  (2007),  no. 5, 419--429; {\tt
arXiv:0704.1955}.

\bibitem[Bau09+]{Bau09+} F.~Baudier, Embeddings of proper metric spaces into Banach
spaces, {\it Houston J. Math.}, to appear; {\tt arXiv:0906.3696}.

\bibitem[Beg99]{Beg99} B.~Begun, A remark on almost extensions of Lipschitz functions,
{\it Israel J. Math.}, {\bf 109} (1999), 151--155.

\bibitem[BL00]{BL00} Y. Benyamini, J. Lindenstrauss, {\it Geometric Nonlinear Functional Analysis}, volume 1,
American Mathematical Society, Providence, R.I., 2000.

\bibitem[Bou86]{Bou86} J.~Bourgain, The metrical interpretation
of superreflexivity in Banach spaces, {\it Israel J. Math.}, {\bf
56} (1986), no. 2, 222--230.

\bibitem[Bou87]{Bou87} J.~Bourgain, Remarks on the extension of Lipschitz maps defined on
discrete sets and uniform homeomorphisms, in: {\it Geometrical
aspects of functional analysis} (1985/86), 157--167, {\it Lecture
Notes in Math.}, {\bf 1267}, Springer, Berlin, 1987.

\bibitem[BMW86]{BMW86} J.~Bourgain, V.~Milman, H.~Wolfson, On type of
metric spaces, {\it Trans. Amer. Math. Soc.}, {\bf 294} (1986),
no. 1, 295--317.

\bibitem[DS58]{DS58} N.~Dunford, J.\,T.~Schwartz, {\it Linear
Operators}. Part \textbf{I}: General Theory, New York,
Interscience Publishers, 1958.


\bibitem[GNS11]{GNS11} O.~Giladi, A.~Naor, G.~Schechtman, Bourgain's discretization
theorem; {\tt arXiv:1110.5368}

\bibitem[Gro93]{Gro93} M.~Gromov, Asymptotic invariants of infinite groups,
in: A.~Niblo, M.~Roller (Eds.) {\it Geometric group theory},
London Math. Soc. Lecture Notes, {\bf 182}, 1--295, Cambridge
University Press, 1993.

\bibitem[Han77]{Han77} {\it Handbook of mathematical logic}. Edited by Jon Barwise.
With the cooperation of H.\,J.~Keisler, K.~Kunen,
Y.\,N.~Moschovakis, A.\,S.~Troelstra. Studies in Logic and the
Foundations of Mathematics, Vol. {\bf 90}, North-Holland
Publishing Co., Amsterdam-New York-Oxford, 1977.

\bibitem[HM82]{HM82} S.~Heinrich, P.~Mankiewicz, Applications of ultrapowers to the
uniform and Lipschitz classification of Banach spaces, {\it Studia
Math.}, {\bf 73} (1982), no. 3, 225--251.

\bibitem[JS09]{JS09} W.\,B.~Johnson, G. Schechtman, Diamond graphs and
super-reflexivity, {\it J. Topol. Anal.}, {\bf 1} (2009), no. 2,
177--189.

\bibitem[LT77]{LT77} J.~Lindenstrauss, L.~Tzafriri, {\it Classical
Banach spaces}, v. {\bf I}, Berlin, Springer-Verlag, 1977.

\bibitem[MN08]{MN08} M.~Mendel, A.~Naor, Metric cotype, {\it Ann.
Math.}, {\bf 168} (2008), 247--298; {\tt arXiv:math/0506201}.

\bibitem[MS86]{MS86} V.\,D.~Milman, G.~Schechtman,
{\it Asymptotic theory of finite dimensional normed spaces},
Berlin, Springer-Verlag, 1986.

\bibitem[Ost11a]{Ost11a} M.\,I.~Ostrovskii, On metric characterizations of some classes of Banach
spaces, {\it C. R. Acad. Bulgare Sci.}, {\bf 64} (2011), no. 6,
775--784; {\tt arXiv:1102.5082}.

\bibitem[Ost11b]{Ost11b} M.\,I.~Ostrovskii, Auerbach bases and minimal volume sufficient enlargements,
{\it Canadian Math. Bull.}, {\bf 54} (2011), 726--738; {\tt
arXiv:1103.0997}.

\bibitem[Ost11c]{Ost11c} M.\,I.~Ostrovskii,
Embeddability of locally finite metric spaces into Banach spaces
is finitely determined, {\it Proc. Amer. Math. Soc.}, posted on
November 28, 2011, PII S 0002-9939(2011)11272-3 (to appear in
print); {\tt arXiv:1103.0748}.

\bibitem[Ost11d]{Ost11d} M.\,I.~Ostrovskii,
Different forms of metric characterizations of classes of Banach
spaces, {\it Houston J. Math.}, to appear; {\tt arXiv:1112.0801}.

\bibitem[OP75]{OP75} R.\,I.~Ovsepian, A.~Pe\l czy\'nski, On the existence of a
fundamental total and bounded biorthogonal sequence in every
separable Banach space, and related constructions of uniformly
bounded orthonormal systems in $L^{2}$, {\it Studia Math.}, {\bf
54} (1975), no. 2, 149--159.

\bibitem[Pel76]{Pel76} A.~Pe\l czy\'nski, All separable Banach spaces admit for every
$\varepsilon >0$ fundamental total and bounded by $1+\varepsilon $
biorthogonal sequences, {\it Studia Math.}, {\bf 55} (1976), no.
3, 295--304.

\bibitem[Pis86]{Pis86} G.~Pisier,
Probabilistic methods in the geometry of Banach spaces, in: {\it
Probability and analysis (Varenna, 1985)}, 167--241, {\it Lecture
Notes in Math.}, {\bf 1206}, Springer, Berlin, 1986.

\bibitem[Pli76]{Pli76} A.~Plichko, Existence of a complete $\varepsilon
$-orthonormal system in a separable normed space (Russian), {\it
Dokl. Akad. Nauk Ukrain. SSR}, Ser. A, {\bf 1976}, no. 1, 22--23.

\bibitem[Pli95]{Pli95} A.~Plichko, On the volume
method in the study of Auerbach bases of finite-dimensional normed
spaces, {\it Colloq. Math.}, {\bf 69} (1995), 267--270.

\bibitem[Rib76]{Rib76} M.~Ribe, On uniformly homeomorphic normed spaces, {\it Ark. Mat.}, {\bf 14} (1976), no. 2,
237--244.

\bibitem[WS11]{WS11}  D.\,P.~Williamson, D.\,B.~Shmoys, {\it The Design of Approximation
Algorithms}, Cambridge University Press, 2011.

\bibitem[Yu06]{Yu06} G.~Yu, Higher index theory of elliptic operators and
geometry of groups, in: {\it  International Congress of
Mathematicians}, Vol. {\bf II},  1623--1639, Eur. Math. Soc.,
Z\"urich, 2006.


\end{thebibliography}
\end{document}